\numberwithin{equation}{section}
\numberwithin{figure}{section}
\theoremstyle{plain}
\newtheorem{thm}{\protect\theoremname}
  \theoremstyle{definition}
  \newtheorem{defn}[thm]{\protect\definitionname}
  \theoremstyle{plain}
  \newtheorem{prop}[thm]{\protect\propositionname}
  \theoremstyle{plain}
  \newtheorem{lem}[thm]{\protect\lemmaname}
  \theoremstyle{plain}
  \newtheorem{cor}[thm]{\protect\corollaryname}
  \providecommand{\corollaryname}{Corollary}
  \providecommand{\definitionname}{Definition}
  \providecommand{\lemmaname}{Lemma}
  \providecommand{\propositionname}{Proposition}
\providecommand{\theoremname}{Theorem}
\begin{document}

\title{\textbf{New Douglas-Rachford Algorithmic Structures and Their Convergence
Analyses}}

\author{Yair Censor and Rafiq Mansour}

\maketitle
\begin{center}
Department of Mathematics 
\par\end{center}

\begin{center}
University of Haifa 
\par\end{center}

\begin{center}
Mt. Carmel, Haifa 3498838, Israel\medskip{}

\par\end{center}

\begin{center}
December 24, 2014. Revised: June 23, 2015, September 2, 2015, October
20, 2015 and November 25, 2015.
\par\end{center}
\begin{abstract}
In this paper we study new algorithmic structures with Douglas-Rachford
(DR) operators to solve convex feasibility problems. We propose to
embed the basic two-set-DR algorithmic operator into the String-Averaging
Projections (SAP) and into the Block-Iterative Projection (BIP) algorithmic
structures, thereby creating new DR algorithmic schemes that include
the recently proposed cyclic Douglas-Rachford algorithm and the averaged
DR algorithm as special cases. We further propose and investigate
a new multiple-set-DR algorithmic operator. Convergence of all these
algorithmic schemes is studied by using properties of strongly quasi-nonexpansive
operators and firmly nonexpansive operators.
\end{abstract}

\section{Introduction\label{sect:introduction}}

\textbf{Contribution of this paper}. We study new algorithmic structures
for the Douglas-Rachford (DR) algorithm%
\footnote{We use the terms ``algorithm'' and ``algorithmic structures''
for the iterative processes studied here although no termination criteria
are present and only the asymptotic behavior of these processes is
studied.%
}. Our starting points for the developments presented here are the
two-set-DR original algorithm and the recent cyclic-DR algorithm of
\cite{btam14}, designed to solve convex feasibility problems. They
use the same basic algorithmic operator which reflects the current
iterate consecutively into two sets and takes the midpoint between
the current iterate and the end-point of the two consecutive reflections
as the next iterate.

The convex feasibility problem (CFP) is to find an element $x\in C$
where $C_{i}$, $i\in I:=\left\{ 1,2,\ldots,m\right\} $, form a finite
family of closed convex sets in a Hilbert space $\mathcal{H},$ $C:=\bigcap_{i\in I}C_{i}$
and $C\neq\textrm{Ø}.$ There are many algorithms in the literature
for solving CFPs, see, e.g., \cite{bb96}. In particular, two algorithmic
structures that encompass many specific feasibility-seeking algorithms
are the String-Averaging Projections (SAP) method \cite{ceh01} and
the Block-Iterative Projection (BIP) method \cite{ac89}. 

In this paper we do two things: (i) create new algorithmic structures
with the 2-set-DR algorithmic operator, and (ii) define and study
an ``$m$-set-DR operator''. 

First we employ the two-set-DR algorithmic operator and embed it in
the SAP and BIP algorithmic structures. In doing so one obtains two
new families of DR algorithms, of which the two-set-DR original algorithm
and the recent cyclic-DR algorithm and the averaged DR algorithm are
special cases. Convergence analyses of these new DR algorithms are
provided.

In our String-Averaging Douglas-Rachford (SA-DR) scheme, we separate
the index set $I$ of the CFP into subsets, called ``strings'',
and proceed along each string by applying the basic two-set-DR operator
(this can be done in parallel for all strings) sequentially along
the string. Then a convex combination of the strings' end-points is
taken as the next iterate.

In our Block-Iterative Douglas-Rachford (BI-DR) scheme, the index
set $I$ of the CFP is again separated into subsets, now called ``blocks'',
dividing the family of convex closed sets into blocks of sets. The
basic two-set-DR operator is applied in a specific way to each block
and the algorithmic scheme proceeds sequentially over the blocks.

Finally, we propose and investigate a generalization of the 2-set-DR
algorithmic operator itself. Instead of reflecting the current iterate
consecutively into two sets and taking the midpoint between the current
iterate and the end-point of the two consecutive reflections as the
next iterate, we propose to allow the algorithmic operator to perform
a finite number, say $r$ (greater or equal 2), of consecutive reflections
into $r$ sets and only then take the midpoint between the current
iterate and the end-point of the $r$ consecutive reflections as the
next iterate. We show how this ``$m$-set-DR operator'' works algorithmically.

We study the convergence of all algorithmic schemes, under the assumption
that $C$ or its interior are nonempty by using properties of strongly
quasi-nonexpansive operators and firmly nonexpansive operators. In
particular, a corner stone of our results is the recognition that
the 2-set-DR operator is not only firmly nonexpansive, thus nonexpansive,
as stated in \cite[Fact 2.2]{btam14}, but also strongly quasi-nonexpansive.

\textbf{The framework. }Since a reflection is a nonexpansive operator
and the class of nonexpansive operators is closed under composition,
the 2-set-DR operator is an averaged operator. Averaged operators
form a very nice class since they are closed under compositions and
convex combinations. Therefore, since all operators discussed in the
paper are averaged one can get alternative proofs of our results from
{[}26{]} or {[}7, Sections 5.2 and 5.3{]}. We chose, however, to work
within the framework of various ``quasi'' operators recognizing
the generality of this framework.

\textbf{Current literature}. Current literature witnesses a strong
interest in DR algorithms in the framework of splitting methods for
optimization, see, e.g., \cite{combettes2011proximal}. We are interested
in the DR algorithm for the feasibility problem and in this direction
there are several relevant publications that include also applications
in various fields. An analysis of the behavior of the cyclic Douglas\textendash Rachford
algorithm when the target intersection set is possibly empty was undertaken
in \cite{borwein2013cyclic}, consult this paper also for many additional
relevant references. The work in \cite{borwein2014reflection} applies
the DR algorithm to the problem of protein conformation. Recent positive
experiences applying convex feasibility algorithms of Douglas-Rachford
type to highly combinatorial and far from convex problems appear in
\cite{artacho2013recent}. Systematic investigation of the asymptotic
behavior of averaged alternating reflections (AAR) which are also
known as the 2-set-DR operators, in the general case when the sets
do not necessarily intersect can be found in \cite{bauschke2004finding},
and \cite{bauschke2006strongly} presents a new averaged alternating
reflections method which produces a strongly convergent sequence.
General recommendations for successful application of the DR algorithm
to convex and nonconvex real matrix-completion problems are presented
in \cite{artacho2013douglas}.

For the convergence of the DR algorithm under various constraints,
see, e.g., \cite{svaiter2011weak} which proves that any sequence
generate by the DR algorithm converges weakly to a solution of an
inclusion problem. In \cite{bauschke2014local} the authors prove
the two-set-DR algorithm's local convergence to a fixed point when
the two sets are finite unions of convex sets, while \cite{borwein2011douglas}
provides convergence results for a prototypical nonconvex two-sets
scenario in which one of the sets is a Euclidean sphere. The results
in \cite{artacho2013global} establish a region of convergence for
the prototypical non-convex Douglas-Rachford iteration which finds
a point in the intersection of a line and a circle. The work in \cite{bauschke2015linear}
introduces regularity notions for averaged nonexpansive operators,
and obtains linear and strong convergence results for quasi-cyclic,
cyclic, and random iterations. New convergence results on the Borwein\textendash Tam
method (BTM) which is also known as the cyclic DR algorithm, and on
the cyclically anchored Douglas\textendash Rachford algorithm (CADRA)
are also presented. 

Relevant to our analyses are properties of operators under compositions
and convex combinations. In, e.g., \cite{bauschke2003composition}
one learns that the composition of projections onto closed convex
sets in Hilbert space is asymptotically regular, and \cite{bauschke2012compositions}
proves that compositions and convex combinations of asymptotically
regular firmly nonexpansive mappings are also asymptotically regular.
In \cite{combettes2004solving} a unified fixed point theoretic framework
is proposed to investigate the asymptotic behavior of algorithms for
finding solutions to monotone inclusion problems. The basic iterative
scheme under consideration involves non-stationary compositions of
perturbed averaged nonexpansive operators.

For details on BIP, SAP and other projection methods, see, e.g., \cite{butnariu1990behavior}
which studies the behavior of a class of BIP algorithms for solving
convex feasibility problems. In \cite{byrne1995block} the simultaneous
MART algorithm (SMART) and the expectation maximization method for
likelihood maximization (EMML) are extended to block-iterative versions,
called BI-SMART and BI-EMML, respectively, that converge to a solution
in the feasible case. The work in \cite{censor2002block} formulates
a block-iterative algorithmic scheme for the solution of systems of
linear inequalities and/or equations and analyze its convergence.
The excellent review \cite{bb96} discusses projection algorithms
for solving convex feasibility problems. More recently, \cite{censor2003convergence}
discusses the convergence of string-averaging projection schemes for
inconsistent convex feasibility problems, \cite{censor2009string}
proposes a definition of sparseness of a family of operators and investigates
a string-averaging algorithmic scheme that favorably handles the common
fixed points problem when the family of operators is sparse.

\textbf{Potential computational advantages.} We have no computational
experience with the new DR algorithms proposed and studied here. Comparative
computational performance can really be made only with exhaustive
testing of the many possible specific variants of the new DR algorithms
permitted by the general schemes and their various user-chosen parameters.
The computational advantages of string-averaging and block-iterative
algorithmic structures have been shown in the past for algorithms
that use orthogonal projections%
\footnote{The term ``orthogonal projection'' is used here not only for the
case when the sets are subspaces but in general. It means here ``nearest
point projection'' operator or ``metric projection'' operator.%
} rather than DR operators. For example, the work on proton computed
tomography (pCT) in \cite{karonis2013distributed} employs very efficiently
a parallel code that uses a version of the string-averaging algorithm
called component-averaged row projections (CARP), see \cite{gordon2005component},
and a version of a block-iterative algorithm called diagonally-relaxed
orthogonal projections (DROP), see \cite{censor2008diagonally}.

It is plausible to hypothesize that since the string-averaging algorithmic
structure and the cyclic DR algorithm of \cite{btam14} have been
demonstrated to be computationally useful separately then so might
very well be their combination in the String-Averaging Douglas-Rachford
(SA-DR). Admittedly, these practical questions should be resolved
in future work, preferably within the context of a significant real-world
application.

\textbf{Structure of the paper}. The paper is organized as follows:
In Section \ref{sec:preliminaries}, we give definitions and preliminaries.
In Section \ref{sect:sa-and-bi}, we introduce the new String-Averaging
Douglas-Rachford (SA-DR) scheme and the new Block-Iterative Douglas-Rachford
(BI-DR) scheme. For both the SA-DR and the BI-DR algorithms we prove
strong convergence to a point in the intersection of the sets under
the assumption that $\textup{int}\:\bigcap_{i\in I}C_{i}\neq\textrm{Ø}$.
In Section \ref{sect:third-generalization} we study our new ``$m$-set-DR
operator'' and show how it works.

\section{Preliminaries\label{sec:preliminaries}}

For the reader's convenience we include in this section some properties
of operators in Hilbert space that will be used to prove our results.
We use the recent excellent book of Cegielski \cite{Ceg-book} as
our desk-copy in which all the results of this section can be found
\cite[Chapter 2]{Ceg-book}. Let $\mathcal{H}$ be a real Hilbert
space with inner product $\left\langle \cdot,\cdot\right\rangle $
and induced norm $\parallel\cdot\parallel$, and let $X\subseteq\mathcal{H}$
be a closed convex subset.\renewcommand{\labelenumi}{\roman{enumi}.}
\begin{defn}
\label{def:fejer}An operator $T:X\rightarrow\mathcal{H}$ is:
\begin{enumerate}
\item \noindent \textit{Fejér monotone }(FM) with respect to a nonempty
subset $C\subseteq X$, if $\Vert T(x)-z\Vert\leq\Vert x-z\Vert$
for all $x\in X$ and $z\in C$.
\item \emph{Strictly Fej}\textit{é}\emph{r monotone} (sFM) with respect
to a nonempty subset $C\subseteq X$, if $\Vert T(x)-z\Vert<\Vert x-z\Vert$
for all $x\notin C$ and $z\in C$.
\item \emph{Nonexpansive }(NE)\emph{,} if $\Vert T(x)-T(y)\Vert\leq\Vert x-y\Vert$
for all $x,y\in X$.
\item \emph{Firmly nonexpansive} (FNE)\emph{,} if $\left\langle T(x)-T(y),x-y\right\rangle \geq\Vert T(x)-T(y)\Vert^{2}$
for all $x,y\in X.$
\item \textit{Strongly nonexpansive}\textsl{ }(SNE), if $T$ is nonexpansive
and for all sequences $\left\{ x^{k}\right\} _{k=0}^{\infty}$, $\left\{ y^{k}\right\} _{k=0}^{\infty}\subseteq X$
such that $\left(x^{k}-y^{k}\right)$ is bounded and $\Vert x^{k}-y^{k}\parallel-\parallel T(x^{k})-T(y^{k})\parallel\rightarrow0$
it follows that $\left(x^{k}-y^{k}\right)-\left(T(x^{k})-T(y^{k})\right)\rightarrow0$. 
\end{enumerate}
\end{defn}
\noindent The following proposition is well-known, see, e.g., \cite[Theorem 2.2.4]{Ceg-book}
and \cite[Corollary 2.2.20]{Ceg-book}.
\begin{prop}
\label{prop:fne-is-ne}Every firmly nonexpansive operator is nonexpansive,
and any convex combination of firmly nonexpansive operators is firmly
nonexpansive.\end{prop}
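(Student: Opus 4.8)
The plan is to prove the two assertions of Proposition~\ref{prop:fne-is-ne} separately, both working directly from the defining inequalities in Definition~\ref{def:fejer}. For the first claim, that every firmly nonexpansive operator is nonexpansive, I would start from the firm nonexpansivity inequality $\langle T(x)-T(y),x-y\rangle\geq\Vert T(x)-T(y)\Vert^{2}$ and apply the Cauchy--Schwarz inequality to its left-hand side. This gives $\Vert T(x)-T(y)\Vert^{2}\leq\langle T(x)-T(y),x-y\rangle\leq\Vert T(x)-T(y)\Vert\,\Vert x-y\Vert$. If $T(x)=T(y)$ the nonexpansivity bound is trivial; otherwise I can divide through by $\Vert T(x)-T(y)\Vert>0$ to obtain $\Vert T(x)-T(y)\Vert\leq\Vert x-y\Vert$, which is exactly nonexpansivity.

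For the second claim, that a convex combination of firmly nonexpansive operators is again firmly nonexpansive, let $T=\sum_{i=1}^{n}\lambda_{i}T_{i}$ with $\lambda_{i}\geq0$ and $\sum_{i=1}^{n}\lambda_{i}=1$, where each $T_{i}$ is firmly nonexpansive. First I would expand the left-hand side by linearity of the inner product,
\[
\langle T(x)-T(y),x-y\rangle=\sum_{i=1}^{n}\lambda_{i}\langle T_{i}(x)-T_{i}(y),x-y\rangle\geq\sum_{i=1}^{n}\lambda_{i}\Vert T_{i}(x)-T_{i}(y)\Vert^{2},
\]
using the firm nonexpansivity of each $T_{i}$. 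It then remains to show that this last sum dominates $\Vert T(x)-T(y)\Vert^{2}=\bigl\Vert\sum_{i=1}^{n}\lambda_{i}(T_{i}(x)-T_{i}(y))\bigr\Vert^{2}$.

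The main obstacle is precisely this final step, and it is handled by convexity of the squared norm. Setting $u_{i}:=T_{i}(x)-T_{i}(y)$, the needed inequality is $\bigl\Vert\sum_{i}\lambda_{i}u_{i}\bigr\Vert^{2}\leq\sum_{i}\lambda_{i}\Vert u_{i}\Vert^{2}$, which is Jensen's inequality applied to the convex function $\Vert\cdot\Vert^{2}$ (equivalently, it follows from the discrete Cauchy--Schwarz inequality applied with weights $\lambda_{i}$). Combining this with the chain of inequalities above yields $\langle T(x)-T(y),x-y\rangle\geq\sum_{i}\lambda_{i}\Vert u_{i}\Vert^{2}\geq\bigl\Vert\sum_{i}\lambda_{i}u_{i}\bigr\Vert^{2}=\Vert T(x)-T(y)\Vert^{2}$, establishing that $T$ is firmly nonexpansive. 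Both parts are short and self-contained, so in practice one could simply cite \cite[Theorem 2.2.4]{Ceg-book} and \cite[Corollary 2.2.20]{Ceg-book} as the excerpt already does; the argument above records why those facts hold.
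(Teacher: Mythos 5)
Your proof is correct. The paper offers no argument of its own here---it states the proposition as well-known and cites \cite[Theorem 2.2.4]{Ceg-book} and \cite[Corollary 2.2.20]{Ceg-book}---and what you have written (Cauchy--Schwarz plus the division-by-norm case split for the first claim; linearity of the inner product together with convexity of $\Vert\cdot\Vert^{2}$ for the second) is exactly the standard argument behind those citations, so your proposal simply makes the delegated facts self-contained.
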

\begin{defn}
\label{def:QNE}An operator $T:X\rightarrow\mathcal{H}$ having a
fixed point is:
\begin{enumerate}
\item \noindent \emph{Quasi-nonexpansive} (QNE), if $T$ is Fejér monotone
with respect to the fixed points set Fix$T$, i.e., if $\Vert T(x)-z\Vert\leq\Vert x-z\Vert$
for all $x\in X$ and $z\in$Fix$T$.
\item \emph{Strictly quasi-nonexpansive} (sQNE), if $T$ is strictly Fejér
monotone with respect to Fix$T$, i.e., if $\Vert T(x)-z\Vert<\Vert x-z\Vert$
for all $x\notin$Fix$T$ and $z\in$Fix$T$.
\item \emph{C-strictly quasi-nonexpansive} (C-sQNE), where $C\neq\textrm{Ø}$
and $C\subseteq$Fix$T$, if $T$ is quasi-nonexpansive and $\Vert T(x)-z\Vert<\Vert x-z\Vert$
for all $x\notin$Fix$T$ and $z\in C$.
\item \emph{$\alpha$-strongly quasi-nonexpansive} ($\alpha$-SQNE), if
$\Vert T(x)-z\Vert^{2}\leq\Vert x-z\Vert^{2}-\alpha\Vert T(x)-x\Vert^{2}$
for all $x\in X$ and $z\in$Fix$T$, where $\alpha\geq0$. If $\alpha>0$
then $T$ is called \emph{strongly quasi-nonexpansive} (SQNE).
\end{enumerate}
\end{defn}
The following implications follow directly from the definitions, see
\cite[page 47]{Ceg-book} and \cite[Remark 2.1.44(iii)]{Ceg-book}.
\begin{prop}
\label{prop:sQNE}For an operator $T:X\rightarrow\mathcal{H}$ having
a fixed point, the following statements hold: 
\begin{enumerate}
\item If $T$ is sQNE then $T$ is $C$-sQNE, where $C\subseteq$\textup{Fix}$T$. 
\item If $T$ is \textup{Fix}$T$-sQNE then $T$ is sQNE.
\item If $T$ is strongly quasi-nonexpansive then it is strictly quasi-nonexpansive.
\end{enumerate}
\end{prop}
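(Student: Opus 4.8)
The plan is to treat the three implications separately; each follows almost immediately upon unwinding the relevant clauses of Definition~\ref{def:QNE}, so I do not anticipate a genuine obstacle. The only points demanding a little care are the case distinction $x\in\textup{Fix}\,T$ versus $x\notin\textup{Fix}\,T$ in item (i) and the role played by the strict positivity of $\alpha$ in item (iii).

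For item (i), I would first check that sQNE forces $T$ to be QNE. The strict inequality $\Vert T(x)-z\Vert<\Vert x-z\Vert$ holds by hypothesis for every $x\notin\textup{Fix}\,T$ and $z\in\textup{Fix}\,T$, whereas for $x\in\textup{Fix}\,T$ one has $T(x)=x$ and hence $\Vert T(x)-z\Vert=\Vert x-z\Vert$; combining the two cases yields Fej\'er monotonicity with respect to $\textup{Fix}\,T$, i.e., QNE. Then, for any nonempty $C\subseteq\textup{Fix}\,T$, the strict inequality required for $z\in C$ is merely the sQNE inequality restricted to the subset $C$. Both defining conditions of $C$-sQNE are thereby met.

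Item (ii) is the same pair of definitions read in reverse. By definition, $\textup{Fix}\,T$-sQNE (the case $C=\textup{Fix}\,T$) asserts that $T$ is QNE and that $\Vert T(x)-z\Vert<\Vert x-z\Vert$ for all $x\notin\textup{Fix}\,T$ and $z\in\textup{Fix}\,T$. This last inequality is verbatim the definition of sQNE, so no further argument is needed.

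For item (iii), I would invoke the defining inequality of SQNE with its constant $\alpha>0$. Fixing $z\in\textup{Fix}\,T$ and taking any $x\notin\textup{Fix}\,T$, we have $T(x)\neq x$, hence $\Vert T(x)-x\Vert^{2}>0$, and since $\alpha>0$ the subtracted term is strictly positive, giving
\[
\Vert T(x)-z\Vert^{2}\leq\Vert x-z\Vert^{2}-\alpha\Vert T(x)-x\Vert^{2}<\Vert x-z\Vert^{2}.
\]
Taking square roots delivers $\Vert T(x)-z\Vert<\Vert x-z\Vert$, which is precisely sQNE. The one subtlety worth flagging is that the strictness comes entirely from $\alpha>0$ together with $x\notin\textup{Fix}\,T$; for $\alpha=0$ the conclusion would collapse, which is exactly why the statement isolates the strongly (rather than merely $\alpha$-strongly) quasi-nonexpansive case.
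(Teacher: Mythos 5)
Your proof is correct and matches the paper's approach: the paper gives no separate argument, stating only that these implications ``follow directly from the definitions'' (citing Cegielski), and your unwinding of Definition~\ref{def:QNE} --- including the case distinction $x\in\textup{Fix}\,T$ versus $x\notin\textup{Fix}\,T$ in (i) and the role of $\alpha>0$ in (iii) --- is exactly that direct verification, carried out carefully.
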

From \cite[Theorem 2.3.5]{Ceg-book}, see also \cite[Fig. 2.14]{Ceg-book},
any composition of SNE operators is SNE, and any convex combination
of SNE operators is SNE. 
\begin{thm}
\label{thm:1}\cite[Corollary 2.1.42]{Ceg-book} Let $U_{i}:X\rightarrow\mathcal{H}$,
$i\in I,$ be quasi-nonexpansive with $C:=\bigcap_{i\in I}\textup{Fix}U_{i}\neq\textrm{Ø}$
and let: $U:=U_{m}U_{m-1}\cdots U_{1}$. If $\textup{int}\, C\neq\textrm{Ø}$,
then $\textup{Fix}U=\bigcap_{i\in I}\textup{Fix}U_{i}$ and $U$ is
\textup{int}C-strictly quasi-nonexpansive. 
\end{thm}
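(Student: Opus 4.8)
The plan is to isolate a single elementary fact about a quasi-nonexpansive operator whose fixed point set has nonempty interior, and then apply it factor-by-factor along the composition $U=U_m\cdots U_1$.

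The key lemma I would establish first is this: if $T:X\rightarrow\mathcal{H}$ is quasi-nonexpansive and $z_{0}\in\mathrm{int}\,(\mathrm{Fix}\,T)$, then the equality $\Vert T(x)-z_{0}\Vert=\Vert x-z_{0}\Vert$ forces $T(x)=x$. To prove it, I would pick $\varepsilon>0$ with $B(z_{0},\varepsilon)\subseteq\mathrm{Fix}\,T$; then for every $v$ with $\Vert v\Vert\leq\varepsilon$ the point $z_{0}+v$ is a fixed point of $T$, so quasi-nonexpansivity gives $\Vert T(x)-(z_{0}+v)\Vert\leq\Vert x-(z_{0}+v)\Vert$. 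Squaring both sides, cancelling $\Vert v\Vert^{2}$, and using $\Vert T(x)-z_{0}\Vert=\Vert x-z_{0}\Vert$ reduces this to $\left\langle T(x)-x,\,v\right\rangle \geq0$ for all $\Vert v\Vert\leq\varepsilon$; replacing $v$ by $-v$ yields $\left\langle T(x)-x,\,v\right\rangle =0$ on a whole ball, whence $T(x)=x$. The entire weight of the hypothesis $\mathrm{int}\,C\neq\emptyset$ sits in this step, and I expect it to be the main obstacle: quasi-nonexpansivity at a \emph{single} fixed point is too weak to force fixedness, and letting $z_{0}$ range over a full ball is exactly what probes every direction $v$.

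With the lemma available I would record the two easy facts. Each $U_{i}$, being quasi-nonexpansive with $C\subseteq\mathrm{Fix}\,U_{i}$, is Fej\'er monotone with respect to $C$, and a composition of operators that are Fej\'er monotone with respect to the same set is again Fej\'er monotone with respect to it; hence $\Vert U(x)-z\Vert\leq\Vert x-z\Vert$ for all $x\in X$ and $z\in C$. Moreover $C\subseteq\mathrm{Fix}\,U$, since a point fixed by every $U_{i}$ is left unmoved by each factor, hence by $U$.

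The core of the argument, which yields both $\mathrm{Fix}\,U\subseteq C$ and the strict inequality at once, I would run as follows. Fix any $z\in\mathrm{int}\,C$; since $C\subseteq\mathrm{Fix}\,U_{i}$ we also have $z\in\mathrm{int}\,(\mathrm{Fix}\,U_{i})$ for every $i$. Put $x_{0}:=x$ and $x_{i}:=U_{i}(x_{i-1})$, so that $x_{m}=U(x)$, and record the telescoping chain
\[
\Vert x_{m}-z\Vert\leq\Vert x_{m-1}-z\Vert\leq\cdots\leq\Vert x_{0}-z\Vert.
\]
Suppose its two ends coincide, i.e. $\Vert U(x)-z\Vert=\Vert x-z\Vert$. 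Then every inequality is an equality, so $\Vert U_{i}(x_{i-1})-z\Vert=\Vert x_{i-1}-z\Vert$ for each $i$, and the lemma (applicable because $z\in\mathrm{int}\,(\mathrm{Fix}\,U_{i})$) gives $x_{i}=x_{i-1}$ for all $i$; hence $x=x_{0}=\cdots=x_{m}$ and $x\in\mathrm{Fix}\,U_{i}$ for every $i$, that is, $x\in C$. Reading this implication two ways finishes the proof. Taking $x\in\mathrm{Fix}\,U$ makes $\Vert U(x)-z\Vert=\Vert x-z\Vert$ automatic, so $x\in C$, giving $\mathrm{Fix}\,U\subseteq C$ and, with the previous paragraph, $\mathrm{Fix}\,U=C$. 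For $x\notin\mathrm{Fix}\,U=C$ and $z\in\mathrm{int}\,C$, the non-strict inequality from quasi-nonexpansivity cannot be an equality (else $x\in C$), so it is strict, which is precisely the $\mathrm{int}\,C$-strict quasi-nonexpansivity of Definition \ref{def:QNE}(3). The one standing point to keep in mind is that the composition is well defined, i.e. the intermediate iterates $x_{i}$ remain in $X$.
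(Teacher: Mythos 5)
Your proof is correct. Note that the paper itself gives no proof of this statement --- it is quoted verbatim from Cegielski's book as \cite[Corollary 2.1.42]{Ceg-book} --- so the comparison is with the book's route rather than with anything in the paper. There the result is obtained in two separated steps: first one shows that any quasi-nonexpansive operator $T$ with $\textup{int}\,\textup{Fix}\,T\neq\emptyset$ is $\textup{int}\,\textup{Fix}\,T$-strictly quasi-nonexpansive (the content of your key lemma, proved by the same polarization trick of testing Fej\'er monotonicity against all points $z_{0}\pm v$ of a ball of fixed points), and then one invokes the general composition result for $C$-strictly quasi-nonexpansive operators, i.e.\ exactly Theorem \ref{thm:2}(ii) of this paper, with $C$ replaced by $\textup{int}\,C$, to get $\textup{Fix}\,U=\bigcap_{i\in I}\textup{Fix}\,U_{i}$ and the strict property for the composition. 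Your telescoping-equality argument is an inlined re-proof of that composition step in this special case: the chain $\Vert x_{m}-z\Vert\leq\cdots\leq\Vert x_{0}-z\Vert$ collapsing to equalities, each equality killed by the lemma, is precisely how Theorem \ref{thm:2}(ii) is proved in general. What your version buys is self-containment and a cleaner logical economy --- one implication (``equality of end norms forces $x\in C$'') simultaneously yields $\textup{Fix}\,U\subseteq C$ and the strictness required by Definition \ref{def:QNE}(3) --- at the cost of not isolating the reusable composition theorem. Two small points you handled correctly and should keep explicit: the deduction $\textup{int}\,C\subseteq\textup{int}\,(\textup{Fix}\,U_{i})$, which is what licenses applying the lemma to every factor with the \emph{same} $z$, and the standing assumption that the intermediate iterates $x_{i}$ stay in $X$ so that the composition and the quasi-nonexpansivity inequalities are available at each step (in the source the $U_{i}$ are effectively self-maps of $X$, as in Theorem \ref{thm:2}).
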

Denoting by $\vartriangle_{m}:=\{u\in R^{m}\mid u\geq0,$ $\sum_{i=1}^{m}u_{i}=1\}$
the standard simplex, and by \textit{$ri\vartriangle_{m}$} its relative
interior, a function \textit{$w:X\rightarrow\vartriangle_{m},$ with
$w(x)=(w_{1}(x),w_{2}(x),\ldots,w_{m}(x))$ is called a weight function}.
According to \cite[Definition 2.1.25 and text on Page 50]{Ceg-book}
a weight function $w:X\rightarrow ri\vartriangle_{m}$ is \emph{appropriate}
with respect to any family of operators $\left\{ U_{i}\right\} _{i\in I}$
if: (i) $w\in ri\vartriangle_{m}$ is a vector of constant weights,
or if (ii) $w_{i}(x)>0$ for all $x\notin\textup{Fix}U_{i}$ and all
$i\in I$. Throughout the paper, we use only the first option of constant
weights.

We have also the following.
\begin{thm}
\label{thm:2}\cite[Theorem 2.1.26]{Ceg-book} Let the operators $U_{i}:X\rightarrow X$
, $i\in I$, with $\bigcap_{i\in I}$\textup{Fix}$U_{i}\neq\textrm{Ø}$,
be $C$-strictly quasi-nonexpansive, where $C\subseteq\bigcap_{i\in I}$\textup{Fix}$U_{i}$,
$C\neq\textrm{Ø}$. If $U$ has one of the following forms:
\begin{enumerate}
\item $U:=\sum_{i\in I}w_{i}U_{i}$ and the weight function $w:X\rightarrow\vartriangle_{m}$
is appropriate,
\item $U:=U_{m}U_{m-1}\ldots U_{1},$
\end{enumerate}
\end{thm}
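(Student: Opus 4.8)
The plan is to show that, in either form, $U$ is $C$-strictly quasi-nonexpansive, which by Definition \ref{def:QNE} amounts to two things: that $U$ be quasi-nonexpansive, i.e.\ Fej\'er monotone with respect to $\textup{Fix}\,U$, and that $\|U(x)-z\|<\|x-z\|$ whenever $z\in C$ and $x\notin\textup{Fix}\,U$. A preliminary step shared by both cases is to pin down the fixed-point set, namely to prove $\textup{Fix}\,U=\bigcap_{i\in I}\textup{Fix}\,U_i$; the inclusion $\bigcap_{i\in I}\textup{Fix}\,U_i\subseteq\textup{Fix}\,U$ is immediate, so the content lies in the reverse inclusion, and it is here that the strict part of the $C$-sQNE hypothesis will be spent.

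For the composition $U=U_m U_{m-1}\cdots U_1$ I would track the intermediate iterates $x_0:=x$ and $x_i:=U_i(x_{i-1})$, so that $x_m=U(x)$. Fixing any $z\in C\subseteq\bigcap_{i\in I}\textup{Fix}\,U_i$ and applying quasi-nonexpansiveness of each $U_i$ in turn produces the telescoping chain $\|x_m-z\|\le\|x_{m-1}-z\|\le\cdots\le\|x_0-z\|$. If $x\in\textup{Fix}\,U$, then $\|x_m-z\|=\|x_0-z\|$ forces every link to be an equality, and the $C$-sQNE property (a strict drop off each operator's own fixed-point set) rules out $x_{i-1}\notin\textup{Fix}\,U_i$; hence $x_{i-1}=x_i$ for all $i$ and $x\in\bigcap_{i\in I}\textup{Fix}\,U_i$, establishing $\textup{Fix}\,U=\bigcap_{i\in I}\textup{Fix}\,U_i$. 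The same chain yields the strict inequality: if $x\notin\textup{Fix}\,U$, the iterates cannot all coincide, so some $x_{i-1}\notin\textup{Fix}\,U_i$, at which step $C$-sQNE forces a strict decrease that propagates to give $\|U(x)-z\|<\|x-z\|$.

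For the convex combination $U=\sum_{i\in I}w_i U_i$ the natural tool is the identity $\|\sum_{i\in I}w_i a_i\|^2=\sum_{i\in I}w_i\|a_i\|^2-\sum_{i<j}w_i w_j\|a_i-a_j\|^2$ applied to $a_i:=U_i(x)-z$. With $z\in\bigcap_{i\in I}\textup{Fix}\,U_i$ and quasi-nonexpansiveness giving $\|U_i(x)-z\|\le\|x-z\|$, this yields $\|U(x)-z\|^2\le\|x-z\|^2-\sum_{i<j}w_i w_j\|U_i(x)-U_j(x)\|^2$, which already delivers Fej\'er monotonicity. Taking $x\in\textup{Fix}\,U$ forces the cross-difference sum to vanish; since the weights are appropriate, hence positive, all the $U_i(x)$ coincide, and comparison with $U(x)=x$ shows each $U_i(x)=x$, again giving $\textup{Fix}\,U=\bigcap_{i\in I}\textup{Fix}\,U_i$. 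For the strict inequality I would instead use the plain convexity bound $\|U(x)-z\|\le\sum_{i\in I}w_i\|U_i(x)-z\|$: if $x\notin\bigcap_{i\in I}\textup{Fix}\,U_i$, then $x\notin\textup{Fix}\,U_j$ for some $j$ with $w_j>0$, where $C$-sQNE makes that term strict while the others remain nonexpansive, so the weighted sum drops strictly below $\|x-z\|$.

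The main obstacle I anticipate is not either inequality in isolation but the bookkeeping that couples them through the fixed-point identity: the strict conclusion is phrased relative to $\textup{Fix}\,U$, whereas the hypotheses only describe the individual $\textup{Fix}\,U_i$ and $C$, so one must first secure $\textup{Fix}\,U=\bigcap_{i\in I}\textup{Fix}\,U_i$ and then exploit precisely the gap between ``$x\notin\textup{Fix}\,U$'' and ``some $U_i$ moves $x$''. Care is also needed to invoke the appropriateness of the (constant, positive) weights exactly where positivity is required, namely to pass from a vanishing weighted sum of squared differences to the vanishing of the differences themselves.
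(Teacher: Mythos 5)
Your proof is correct. Note, however, that the paper itself offers no proof of this statement: it is imported verbatim from Cegielski's book as \cite[Theorem 2.1.26]{Ceg-book}, so there is no in-paper argument to compare against. Your two-pronged argument --- the telescoping chain $\Vert x_m-z\Vert\leq\cdots\leq\Vert x_0-z\Vert$ with equality forcing each $x_{i-1}\in\textup{Fix}\,U_i$ for the composition, and the strong-convexity identity $\bigl\Vert\sum_{i}w_ia_i\bigr\Vert^2=\sum_{i}w_i\Vert a_i\Vert^2-\sum_{i<j}w_iw_j\Vert a_i-a_j\Vert^2$ for the convex combination --- is essentially the standard proof found in the cited source, and your handling of the coupling between the fixed-point identity and the strict inequality is exactly where the content lies. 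Two minor points worth tightening: for quasi-nonexpansiveness of $U$ the telescoping chain must be run for every $z\in\bigcap_{i\in I}\textup{Fix}\,U_i$ (which equals $\textup{Fix}\,U$ once the identity is established), not merely for $z\in C$; your composition paragraph fixes $z\in C$, though the same chain works verbatim for general $z$ in the intersection. And your restriction to constant positive weights is licensed by the paper's explicit convention (``we use only the first option of constant weights''); under the second notion of appropriateness the argument survives anyway, since $w_i(x)=0$ forces $x\in\textup{Fix}\,U_i$ by contraposition, so the indices escaping the vanishing cross-terms are already fixed points.
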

\begin{flushleft}
\textit{then }
\par\end{flushleft}

\textit{
\begin{equation}
\textnormal{Fix}U=\bigcap_{i\in I}\textnormal{Fix}U_{i},
\end{equation}
}

\noindent \textit{and $U$ is $C$-strictly quasi-nonexpansive.}
\begin{thm}
\label{thm:3}\cite[Theorem 2.1.14]{Ceg-book} Let $U_{i}:X\rightarrow\mathcal{H},$
$i\in I:=\left\{ 1,2,\ldots,m\right\} ,$ be nonexpansive operators
with a common fixed point and let $U:=\sum_{i\in I}w_{i}U_{i}$ with
the weight function $w(x)=(w_{1}(x),w_{2}(x),\ldots,w_{m}(x))\in ri\vartriangle_{m}$.
Then
\end{thm}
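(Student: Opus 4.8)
The plan is to prove that the fixed point set of the convex combination coincides with the intersection of the individual fixed point sets, i.e.\ $\textnormal{Fix}\,U=\bigcap_{i\in I}\textnormal{Fix}\,U_{i}$, which is the natural conclusion matching Theorems \ref{thm:1} and \ref{thm:2}. One inclusion is immediate: if $x\in\bigcap_{i\in I}\textnormal{Fix}\,U_{i}$ then $U_{i}(x)=x$ for every $i$, so $U(x)=\sum_{i\in I}w_{i}U_{i}(x)=\left(\sum_{i\in I}w_{i}\right)x=x$ because $w(x)\in ri\vartriangle_{m}$ has components summing to one; hence $\bigcap_{i\in I}\textnormal{Fix}\,U_{i}\subseteq\textnormal{Fix}\,U$.

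For the reverse inclusion I would fix $x\in\textnormal{Fix}\,U$ and any common fixed point $z\in\bigcap_{i\in I}\textnormal{Fix}\,U_{i}$ (which exists by hypothesis). Since each $U_{i}$ is nonexpansive and $U_{i}(z)=z$, we have $\Vert U_{i}(x)-z\Vert\leq\Vert x-z\Vert$ for all $i$. Using $U(x)=x$ together with $\sum_{i\in I}w_{i}=1$, I write $x-z=\sum_{i\in I}w_{i}\left(U_{i}(x)-z\right)$ and estimate by the triangle inequality,
\begin{equation}
\Vert x-z\Vert=\Big\Vert\sum_{i\in I}w_{i}\left(U_{i}(x)-z\right)\Big\Vert\leq\sum_{i\in I}w_{i}\Vert U_{i}(x)-z\Vert\leq\sum_{i\in I}w_{i}\Vert x-z\Vert=\Vert x-z\Vert.
\end{equation}
Consequently both inequalities are in fact equalities. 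The rightmost equality, together with $\Vert U_{i}(x)-z\Vert\leq\Vert x-z\Vert$ and $w_{i}>0$ for all $i$, forces $\Vert U_{i}(x)-z\Vert=\Vert x-z\Vert$ for every $i$.

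The crux is to exploit the equality in the triangle inequality. Writing $v_{i}:=U_{i}(x)-z$ and $\rho:=\Vert x-z\Vert$, we now know $\Vert v_{i}\Vert=\rho$ for all $i$ and $\Vert\sum_{i\in I}w_{i}v_{i}\Vert=\rho$. Since $\mathcal{H}$ is an inner-product space, Cauchy--Schwarz gives $\langle v_{i},v_{j}\rangle\leq\Vert v_{i}\Vert\,\Vert v_{j}\Vert=\rho^{2}$, whence
\begin{equation}
\rho^{2}=\Big\Vert\sum_{i\in I}w_{i}v_{i}\Big\Vert^{2}=\sum_{i,j\in I}w_{i}w_{j}\langle v_{i},v_{j}\rangle\leq\rho^{2}\Big(\sum_{i\in I}w_{i}\Big)^{2}=\rho^{2}.
\end{equation}
Equality forces $\langle v_{i},v_{j}\rangle=\rho^{2}$ for all $i,j$, because each summand $w_{i}w_{j}\left(\rho^{2}-\langle v_{i},v_{j}\rangle\right)$ is nonnegative with $w_{i}w_{j}>0$; therefore $\Vert v_{i}-v_{j}\Vert^{2}=2\rho^{2}-2\langle v_{i},v_{j}\rangle=0$.

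Thus all the $v_{i}$ coincide, so $U_{i}(x)=z+v_{1}$ is independent of $i$; averaging then yields $x=U(x)=z+v_{1}$, and hence $U_{i}(x)=x$ for every $i$, giving $x\in\bigcap_{i\in I}\textnormal{Fix}\,U_{i}$. (If $\rho=0$ then $x=z$ already lies in the intersection.) I expect the main obstacle, and the only genuinely nontrivial step, to be recognizing that equality in the triangle inequality must be pushed down to the pairwise Cauchy--Schwarz equalities via the squared-norm expansion above; the rest is bookkeeping with $\sum_{i\in I}w_{i}=1$ and nonexpansiveness.
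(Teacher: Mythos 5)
Your proof is correct. Note, however, that the paper does not prove this statement at all: it is quoted verbatim from \cite[Theorem 2.1.14]{Ceg-book}, so there is no in-paper argument to compare against, and your write-up serves as a self-contained verification of the cited fact. Your route is the standard one and, as far as I can tell, essentially the one in the cited source: the easy inclusion $\bigcap_{i\in I}\textnormal{Fix}\,U_{i}\subseteq\textnormal{Fix}\,U$, then for $x\in\textnormal{Fix}\,U$ and a common fixed point $z$ the chain $\Vert x-z\Vert\leq\sum_{i}w_{i}\Vert U_{i}(x)-z\Vert\leq\Vert x-z\Vert$, with positivity of the weights forcing $\Vert U_{i}(x)-z\Vert=\Vert x-z\Vert$ for every $i$, and finally strict convexity of the Hilbert norm---which you implement cleanly via the squared-norm expansion $\Vert\sum_{i}w_{i}v_{i}\Vert^{2}=\sum_{i,j}w_{i}w_{j}\langle v_{i},v_{j}\rangle$ and equality in Cauchy--Schwarz---to conclude all $v_{i}$ coincide, whence $U_{i}(x)=x$. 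Two small points: your handling of $\rho=0$ is appropriate, and although the theorem is stated with a weight function $w(x)$ rather than constant weights, your argument is pointwise at the fixed $x\in\textnormal{Fix}\,U$, so it applies verbatim with $w_{i}:=w_{i}(x)>0$; the paper in any case restricts itself to constant weights.
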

\noindent \textit{
\begin{equation}
\textnormal{Fix}U=\bigcap_{i\in I}\textnormal{Fix}U_{i}.
\end{equation}
}
\begin{defn}
An operator $U:X\rightarrow X$ is \textit{asymptotically regular}
if for all $x\in X$,
\end{defn}
\noindent 
\begin{equation}
\underset{k\rightarrow\infty}{\textnormal{lim}}\Vert U^{k+1}(x)-U^{k}(x)\Vert=0.
\end{equation}

By combining the results of \cite[Theorem 3.4.3]{Ceg-book}, \cite[Corollary 3.4.6]{Ceg-book}
for $\lambda=1$, and \cite[Theorem 3.4.9]{Ceg-book}, see also \cite[Fig. 3.2]{Ceg-book},
we can state the following.
\begin{thm}
\label{thm:4} Let $U:X\rightarrow X$ be an operator with a fixed
point. If $U$ is strongly quasi-nonexpansive or strongly nonexpansive,
then $U$ is asymptotically regular.\end{thm}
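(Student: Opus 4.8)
The plan is to treat the two hypotheses separately, and in each case to fix an arbitrary $x\in X$, write $x^k:=U^k(x)$, and fix some $z\in\textup{Fix}\,U$ (which exists by assumption). The target in both cases is $\|x^{k+1}-x^k\|\to0$.

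For the strongly quasi-nonexpansive case (say $U$ is $\alpha$-SQNE with $\alpha>0$), I would substitute $x^k$ for $x$ in the defining inequality of Definition~\ref{def:QNE}(iv) to obtain
\begin{equation}
\|x^{k+1}-z\|^2\leq\|x^k-z\|^2-\alpha\|x^{k+1}-x^k\|^2
\end{equation}
for every $k$. This shows at once that $\{\|x^k-z\|^2\}$ is non-increasing and bounded below by $0$, hence convergent. Summing over $k=0,1,\dots,N$ telescopes the squared-distance terms and gives
\begin{equation}
\alpha\sum_{k=0}^{N}\|x^{k+1}-x^k\|^2\leq\|x^0-z\|^2-\|x^{N+1}-z\|^2\leq\|x^0-z\|^2 .
\end{equation}
Since $\alpha>0$ and the bound is independent of $N$, the nonnegative series $\sum_{k}\|x^{k+1}-x^k\|^2$ converges, so its general term tends to $0$, which is exactly asymptotic regularity.

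For the strongly nonexpansive case, I would first note that nonexpansiveness together with $U(z)=z$ gives $\|x^{k+1}-z\|=\|U(x^k)-U(z)\|\leq\|x^k-z\|$, so $\{\|x^k-z\|\}$ is non-increasing, bounded below, and therefore convergent; in particular $(x^k-z)$ is bounded. The main idea is then to feed the orbit into the defining implication of strong nonexpansiveness in Definition~\ref{def:fejer}(v) by taking the two sequences to be $\{x^k\}$ and the constant sequence $y^k:=z$. For this choice, $(x^k-y^k)=(x^k-z)$ is bounded and
\begin{equation}
\|x^k-y^k\|-\|U(x^k)-U(y^k)\|=\|x^k-z\|-\|x^{k+1}-z\|\to0
\end{equation}
since $\{\|x^k-z\|\}$ converges. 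Strong nonexpansiveness then forces $(x^k-z)-(x^{k+1}-z)=x^k-x^{k+1}\to0$, giving asymptotic regularity once more.

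I expect the only real obstacle to lie in the strongly nonexpansive case. In the SQNE case the defining inequality already isolates the quantity $\|x^{k+1}-x^k\|^2$ with a positive coefficient, so telescoping immediately yields summability and nothing clever is needed. By contrast, the SNE definition is phrased in terms of two arbitrary sequences, and the crux is to realize that the right comparison is between the orbit and a constant sequence sitting at a fixed point, and that convergence of $\{\|x^k-z\|\}$ is exactly what supplies the norm-gap hypothesis $\|x^k-y^k\|-\|U(x^k)-U(y^k)\|\to0$. Once that pairing is identified, the conclusion is immediate.
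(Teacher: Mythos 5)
Your proof is correct. Note, however, that the paper gives no argument of its own for this statement: Theorem~\ref{thm:4} is stated by citing Cegielski's book (Theorem 3.4.3, Corollary 3.4.6 with $\lambda=1$, and Theorem 3.4.9), so any comparison is really with the proofs of those cited results --- and your two arguments are precisely the standard ones found there. In the SQNE case, substituting the orbit into the $\alpha$-SQNE inequality of Definition~\ref{def:QNE}(iv) and telescoping to get $\alpha\sum_{k=0}^{N}\Vert x^{k+1}-x^{k}\Vert^{2}\leq\Vert x^{0}-z\Vert^{2}$, hence summability of the squared increments, is exactly the textbook proof. In the SNE case, your pairing of the orbit $\{x^{k}\}$ with the constant sequence $y^{k}:=z\in\textup{Fix}\,U$ is the classical Bruck--Reich device: nonexpansiveness (which Definition~\ref{def:fejer}(v) includes) makes $\{\Vert x^{k}-z\Vert\}$ non-increasing and bounded below, hence convergent, which supplies both the boundedness of $(x^{k}-y^{k})$ and the norm-gap hypothesis $\Vert x^{k}-z\Vert-\Vert x^{k+1}-z\Vert\rightarrow0$, and the SNE implication then yields $x^{k}-x^{k+1}\rightarrow0$ directly. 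The one hypothesis you use silently but legitimately is that $U(y^{k})=U(z)=z$, i.e., that the constant sequence is an admissible test sequence in $X$; since $z$ is a fixed point of $U:X\rightarrow X$ this is immediate. So what your write-up buys, relative to the paper, is a self-contained two-line-of-attack proof in place of a pointer to three external results; there is no gap.
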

\begin{defn}
\label{def:demi-closed}An operator $T:X\rightarrow\mathcal{H}$ is
\textit{demi-closed }at 0 if for any sequence $x^{k}\rightharpoonup y\in X$
with $T(x^{k})\rightarrow0$ we have $T(y)=0$.
\end{defn}
If we replace the weak convergence $x^{k}\rightharpoonup y$ by the
strong one in Definition \ref{def:demi-closed}, then we obtain the
definition of the closedness of $T$ at 0. If $\mathcal{H}$ is finite-dimensional,
then the notions of a demi-closed operator and a closed operator coincide,
see, e.g., \cite[Page 107]{Ceg-book}. 

Denoting by Id the identity operator we have the following theorem.
\begin{thm}
\label{thm:5}\cite[Theorem 3.5.2]{Ceg-book} Let $X\subseteq\mathcal{H}$
be nonempty closed convex subset of a finite-dimensional Hilbert space
$\mathcal{H}$ and let $U:X\rightarrow X$ be an operator with a fixed
point and such that $U-\textup{Id}$ is closed at 0. If $U$ is quasi-nonexpansive
and asymptotically regular, then, for arbitrary $x\in X$, the sequence
$\left\{ U^{k}(x)\right\} _{k=0}^{\infty}$ converges to a point $z\in\textup{Fix}U$.
\end{thm}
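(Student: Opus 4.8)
The plan is to run the standard Fej\'er-monotonicity argument, exploiting finite-dimensionality to pass from boundedness to an actual limit point. Fix an arbitrary starting point $x\in X$ and write $x^{k}:=U^{k}(x)$ for the iterates, so that $x^{k+1}=U(x^{k})$. Let $p\in\textup{Fix}U$ be any fixed point, which exists by hypothesis. Since $U$ is quasi-nonexpansive, it is Fej\'er monotone with respect to $\textup{Fix}U$, and hence $\Vert x^{k+1}-p\Vert=\Vert U(x^{k})-p\Vert\leq\Vert x^{k}-p\Vert$ for every $k$. Thus the scalar sequence $\{\Vert x^{k}-p\Vert\}_{k=0}^{\infty}$ is nonincreasing and bounded below by $0$, so it converges; in particular $\Vert x^{k}-p\Vert\leq\Vert x^{0}-p\Vert$ for all $k$, which shows that $\{x^{k}\}_{k=0}^{\infty}$ is bounded.

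Next I would extract a limit point. Because $\mathcal{H}$ is finite-dimensional and $\{x^{k}\}$ is bounded, the Bolzano--Weierstrass theorem yields a subsequence $\{x^{k_{j}}\}$ converging strongly to some $z$, and $z\in X$ since $X$ is closed. To identify $z$ as a fixed point I would use the two remaining hypotheses. Asymptotic regularity of $U$ gives $\Vert(U-\textup{Id})(x^{k})\Vert=\Vert U^{k+1}(x)-U^{k}(x)\Vert\to0$, so in particular $(U-\textup{Id})(x^{k_{j}})\to0$. Since $x^{k_{j}}\to z$ strongly and $U-\textup{Id}$ is closed at $0$, the defining property of closedness forces $(U-\textup{Id})(z)=0$, that is, $z\in\textup{Fix}U$.

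Finally I would upgrade subsequential convergence to convergence of the whole sequence. Having shown $z\in\textup{Fix}U$, I can reapply the Fej\'er-monotonicity estimate of the first step with the specific fixed point $p=z$: the sequence $\{\Vert x^{k}-z\Vert\}$ is nonincreasing, hence convergent. But along the subsequence $\Vert x^{k_{j}}-z\Vert\to0$, so the limit of the whole monotone sequence must be $0$; therefore $\Vert x^{k}-z\Vert\to0$ and $x^{k}\to z\in\textup{Fix}U$, as claimed.

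As for the main obstacle, the argument itself is routine, and the only genuinely load-bearing point is the interplay between asymptotic regularity and closedness of $U-\textup{Id}$ at $0$, which together are exactly what certifies that a subsequential limit of the iterates is a fixed point rather than merely an accumulation point. Finite-dimensionality enters only to guarantee a strongly convergent subsequence, so the delicate feature of an infinite-dimensional analogue (replacing strong by weak convergence and closedness by demi-closedness, where weak subsequential limits must be handled with more care) is absent here.
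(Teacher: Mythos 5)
Your proof is correct and is essentially the canonical argument for this result, which the paper itself does not prove but imports from Cegielski's book (Theorem 3.5.2) \textemdash{} namely, Fej\'er monotonicity from quasi-nonexpansiveness to get boundedness, Bolzano\textendash Weierstrass in finite dimension for a strong subsequential limit $z\in X$, asymptotic regularity plus closedness of $U-\textup{Id}$ at $0$ to certify $z\in\textup{Fix}U$, and monotonicity of $\Vert x^{k}-z\Vert$ to upgrade to convergence of the full sequence. Nothing is missing; your closing remark about the weak/demi-closed infinite-dimensional analogue correctly identifies the role of finite-dimensionality, which is exactly why the paper pairs this theorem with Opial's theorem (Theorem \ref{thm:6-opial}) for the general Hilbert space setting.
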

Finally, here is the well-known theorem due to Opial.
\begin{thm}
\label{thm:6-opial}\cite[Theorem 3.5.1]{Ceg-book} Let $X\subseteq\mathcal{H}$
be a nonempty closed convex subset of a Hilbert space $\mathcal{H}$
and let $U:X\rightarrow X$ be a nonexpansive and asymptotically regular
operator with a fixed point. Then, for any $x\in X$, the sequence
$\left\{ U^{k}(x)\right\} _{k=0}^{\infty}$ converges weakly to a
point $z\in\textup{Fix}U$.
\end{thm}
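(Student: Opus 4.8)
The plan is to follow the classical route to Opial-type weak convergence, resting on three pillars: Fej\'{e}r monotonicity of the orbit, the demiclosedness of $\mathrm{Id}-U$ at $0$, and the Opial property of the Hilbert-space norm. Fix $x\in X$ and abbreviate $x^{k}:=U^{k}(x)$. First I would record that the sequence is Fej\'{e}r monotone with respect to $F:=\mathrm{Fix}\,U$: for any $z\in F$, nonexpansiveness together with $U(z)=z$ gives $\|x^{k+1}-z\|=\|U(x^{k})-U(z)\|\le\|x^{k}-z\|$, so $\{\|x^{k}-z\|\}_{k}$ is nonincreasing and therefore convergent, and in particular $\{x^{k}\}$ is bounded. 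Boundedness in the Hilbert space $\mathcal{H}$ guarantees that $\{x^{k}\}$ has at least one weak cluster point.

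The second step is to identify all weak cluster points as fixed points. By asymptotic regularity, $\|x^{k}-U(x^{k})\|=\|U^{k}(x)-U^{k+1}(x)\|\to0$, i.e. $(\mathrm{Id}-U)(x^{k})\to0$. I would then invoke (or, since the excerpt supplies Definition \ref{def:demi-closed} but not the principle itself, establish) the demiclosedness of $\mathrm{Id}-U$ at $0$ for nonexpansive $U$. The cleanest argument uses the Opial property of the norm: if $x^{k_{j}}\rightharpoonup y$ but $U(y)\ne y$, then the Hilbert identity $\|x^{k}-U(y)\|^{2}=\|x^{k}-y\|^{2}+2\langle x^{k}-y,\,y-U(y)\rangle+\|y-U(y)\|^{2}$ together with $x^{k_{j}}-y\rightharpoonup0$ yields $\liminf_{j}\|x^{k_{j}}-y\|<\liminf_{j}\|x^{k_{j}}-U(y)\|$; but the triangle inequality and nonexpansiveness give $\|x^{k_{j}}-U(y)\|\le\|x^{k_{j}}-U(x^{k_{j}})\|+\|x^{k_{j}}-y\|$, whose $\liminf$ is at most $\liminf_{j}\|x^{k_{j}}-y\|$ because the first term vanishes. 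This contradiction forces $y\in F$, so every weak cluster point lies in $F$.

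Finally I would prove uniqueness of the weak cluster point, from which weak convergence of the whole sequence follows. Suppose $y_{1},y_{2}\in F$ are two weak cluster points, with $x^{k_{j}}\rightharpoonup y_{1}$ and $x^{l_{i}}\rightharpoonup y_{2}$. Because $y_{1},y_{2}\in F$, the first step ensures that both $\lim_{k}\|x^{k}-y_{1}\|$ and $\lim_{k}\|x^{k}-y_{2}\|$ exist as full limits, not merely along subsequences. If $y_{1}\ne y_{2}$, applying the Opial property along $\{x^{k_{j}}\}$ gives $\lim_{k}\|x^{k}-y_{1}\|<\lim_{k}\|x^{k}-y_{2}\|$, while applying it along $\{x^{l_{i}}\}$ gives the reverse strict inequality, a contradiction. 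Hence $y_{1}=y_{2}$, the bounded sequence has a single weak cluster point $z\in F$, and $x^{k}\rightharpoonup z$.

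I expect the main obstacle to be the demiclosedness step: Fej\'{e}r monotonicity and the consequence of asymptotic regularity are routine, but upgrading ``$(\mathrm{Id}-U)(x^{k})\to0$ and $x^{k_{j}}\rightharpoonup y$'' to ``$U(y)=y$'' is exactly where the subtleties of the weak topology enter and where the Opial property does the real work. I would therefore take care either to cite this principle or to include the short Opial-based argument above; the remainder of the proof is then bookkeeping with the convergent distance sequences $\{\|x^{k}-z\|\}_{k}$.
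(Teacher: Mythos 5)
Your proof is correct, and it matches the standard argument: the paper itself does not prove this theorem but imports it from Cegielski's book, and your three-step route --- Fej\'{e}r monotonicity of the orbit giving boundedness and convergent distance sequences, demiclosedness of $\mathrm{Id}-U$ at $0$ established via the Opial property, and uniqueness of the weak cluster point from the two full limits $\lim_{k}\Vert x^{k}-y_{i}\Vert$ --- is precisely the classical Opial proof that the cited reference follows. The only cosmetic omission is the observation that a weak cluster point $y$ lies in $X$ (so that $U(y)$ is defined), which holds because the closed convex set $X$ is weakly closed.
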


\section{String-Averaging and Block-Iterative Douglas-Rachford \label{sect:sa-and-bi}}

In this section we describe our new String-Averaging Douglas-Rachford
(SA-DR) algorithmic scheme and our new Block-Iterative Douglas-Rachford
(BI-DR) algorithmic scheme and prove their convergence. Let $C_{1},C_{2},\ldots,C_{m}$,
be nonempty closed convex subsets of $\mathcal{H}$, defining a convex
feasibility problem (CFP) of finding an element in their intersection.
We call $I:=\{1,2,\ldots,m\}$ the index set of the convex feasibility
problem.

\subsection{The algorithms}
\begin{defn}
\label{def:relax-and-reflect-1}Let $T:X\rightarrow\mathcal{H}$ and
$\lambda\in[0,2]$. The operator $T_{\lambda}:X\rightarrow\mathcal{H}$
defined by $T_{\lambda}:=(1-\lambda)\textup{Id}+\lambda T$ is called
a \emph{$\lambda$-relaxation} or, in short,\emph{ relaxation} of
the operator $T$. If $\lambda=2,$ then $T_{\lambda}$ is called
the \textit{reflection} of $T$.
\end{defn}
\noindent Note that if $\lambda=1$ then $T_{\lambda}=T_{1}=T$. The\emph{
}2-set-Douglas-Rachford operator is defined as follows, see, e.g.,
\cite[Equation (2)]{btam14}.
\begin{defn}
\label{def:2-DR-1}Let $A,B\subseteq\mathcal{H}$ be closed convex
subsets and let $P_{A}$ and $P_{B}$ be the orthogonal projections
onto $A$ and $B$, respectively. The operators $R_{A}:=2P_{A}-\textup{Id}$
and $R_{B}:=2P_{B}-\textup{Id}$ are the reflection operators into
$A$ and $B$, respectively. The operator $T_{B,A}:\mathcal{H}\rightarrow\mathcal{H}$,
defined by,
\end{defn}
\noindent 
\begin{equation}
T_{B,A}:=\frac{1}{2}(\textup{Id}+R_{A}R_{B})
\end{equation}

\noindent is the \emph{``2-set-Douglas-Rachford''} (2-set-DR) operator
.

This operator was termed earlier ``averaged alternating reflection''
(AAR), see \cite[Subsection 4.3.5]{Ceg-book} for details and references.
For $t=1,2,\ldots,M$ , let the ``string'' $I_{t}$ be an ordered
finite nonempty subset of $I$ of the CFP, of the form

\noindent 
\begin{equation}
I_{t}=(i_{1}^{t},i_{2}^{t},\ldots,i_{\gamma(t)}^{t}),
\end{equation}

\noindent where the ``length'' of the string $I_{t}$, denoted by
$\gamma(t)$, is the number of elements in $I_{t}$. Denoting $T_{C_{i},C_{j}}$
by $T_{i,j}$, the String-Averaging Douglas-Rachford-(SA-DR) algorithmic
scheme is as follows.\medskip{}

\noindent \textbf{Algorithm 1. The String-Averaging Douglas-Rachford
Scheme.}

\noindent \textbf{Initialization}: $x^{0}\in\mathcal{H}$ is arbitrary.

\noindent \textbf{Iterative Step:} Given the current iterate $x^{k}$,

\noindent (i) Calculate, for all $t=1,2,\ldots,M$,

\begin{equation}
T_{t}(x^{k}):=T_{i_{\gamma(t)}^{t},i_{1}^{t}}T_{i_{\gamma(t)-1}^{t},i_{\gamma(t)}^{t}}\ldots T_{i_{2}^{t},i_{3}^{t}}T_{i_{1}^{t},i_{2}^{t}}(x^{k}),\label{eq:string}
\end{equation}

\noindent (ii) Calculate the convex combination of the strings' end-points
by

\noindent 
\begin{equation}
x^{k+1}=\sum_{t=1}^{M}w_{t}T_{t}(x^{k}),
\end{equation}

\noindent with $w_{t}>0,$ for all $t=1,2,\ldots,M$, and $\sum_{t=1}^{M}w_{t}=1$.
\medskip{}

\noindent Note that the work on the strings in part (i) of the Iterative
Step in the SA-DR scheme of Algorithm 1 can be performed in parallel
on all strings. 

\noindent In order to present the Block-Iterative Douglas-Rachford
scheme, we again look at nonempty subsets 
\begin{equation}
I_{t}=(i_{1}^{t},i_{2}^{t},\ldots,i_{\gamma(t)}^{t}),
\end{equation}
of the index set $I$ of the CFP, which are now called ``blocks''.

\noindent \medskip{}

\noindent \textbf{Algorithm 2. The Block-Iterative Douglas-Rachford
Scheme.}

\noindent \textbf{Initialization:} $x^{0}\in\mathcal{H}$ is arbitrary.

\noindent \textbf{Iterative Step:} Given the current iterate $x^{k}$,
pick a block index $t=t(k)$ according to a cyclic rule $t(k)=k\textup{mod}M+1$. 

\noindent (i) Calculate intermediate points obtained by applying the
2-set-Douglas-Rachford operator to pairs of sets in the $t$-th block
as follows. For all $\ell=1,2,\ldots,\gamma(t)-1$ define

\begin{equation}
z_{\ell}=T_{i_{\ell}^{t},i_{\ell+1}^{t}}(x^{k}),\label{eq:z-el-1}
\end{equation}

\noindent and for $\ell=\gamma(t)$ let

\begin{equation}
z_{\gamma(t)}=T_{i_{\gamma(t)}^{t},i_{1}^{t}}(x^{k}).\label{eq:z-el-2}
\end{equation}

\noindent (ii) Calculate the convex combination of the intermediate
points

\noindent 
\begin{equation}
x^{k+1}=\sum_{\ell=1}^{\gamma(t)}w_{\ell}^{t}z_{\ell},
\end{equation}

\noindent with $w_{\ell}^{t}>0,$ for all $\ell=1,2,\ldots,\gamma(t)$,
and $\sum_{\ell=1}^{\gamma(t)}w_{\ell}^{t}=1$.

\noindent \medskip{}

Note that the work on the pairs of sets in a block in part (i) of
the Iterative Step in the BI-DR scheme of Algorithm 2 can be performed
in parallel on all pairs.

\subsection{Convergence proofs}

The following lemma characterizes the fixed points of the 2-set Douglas-Rachford
operator.
\begin{lem}
\label{lem:fixed-points-2-RD}\cite[Corollary 3.9]{bauschke2004finding}
Let $A,B\subseteq\mathcal{H}$ be closed and convex with nonempty
intersection. Then
\end{lem}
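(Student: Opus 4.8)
The final statement is Lemma~\ref{lem:fixed-points-2-RD}, which characterizes the fixed-point set of the 2-set-DR operator $T_{B,A}=\tfrac12(\mathrm{Id}+R_AR_B)$ for closed convex sets $A,B$ with $A\cap B\neq\emptyset$. Based on the source it cites (a corollary in Bauschke--Combettes--Luke), the expected conclusion is
\begin{equation}
\mathrm{Fix}\,T_{B,A}=\{x\in\mathcal{H}\mid P_B(x)\in A\cap B\}=\left(P_B\right)^{-1}(A\cap B),
\end{equation}
or equivalently $\mathrm{Fix}\,T_{B,A}=(A\cap B)+N$, where $N$ encodes the directions fixed by the reflections; in particular $P_B(\mathrm{Fix}\,T_{B,A})=A\cap B$. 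Let me sketch how I would prove it directly.

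\textbf{The approach.} The plan is to unwind the definition of a fixed point algebraically. First I would observe that $x\in\mathrm{Fix}\,T_{B,A}$ means $x=\tfrac12(x+R_AR_B(x))$, which simplifies immediately to $R_AR_B(x)=x$, i.e. $x$ is a fixed point of the composition of the two reflections $R_AR_B$. So the task reduces to characterizing $\mathrm{Fix}\,R_AR_B$. Since each reflection $R_C=2P_C-\mathrm{Id}$ is an isometry (indeed nonexpansive, as a reflection), the composition $R_AR_B$ is nonexpansive, and $T_{B,A}$ is its $\tfrac12$-averaged version, consistent with the paper's later remark that the 2-set-DR operator is firmly nonexpansive.

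\textbf{Key steps in order.} I would rewrite $R_AR_B(x)=x$ by substituting $R_B(x)=2P_B(x)-x$ and then applying $R_A$. Setting $b:=P_B(x)$ so that $R_B(x)=2b-x$, the fixed-point equation becomes $R_A(2b-x)=x$, i.e. $2P_A(2b-x)-(2b-x)=x$, which rearranges to $P_A(2b-x)=b$. Thus $x\in\mathrm{Fix}\,T_{B,A}$ if and only if, writing $b=P_B(x)$, we have $P_A(2b-x)=b=P_B(x)$. The next step is to extract geometric content from these two projection identities using the variational characterization of the metric projection onto a convex set: $p=P_C(y)$ iff $p\in C$ and $\langle y-p,\,c-p\rangle\le 0$ for all $c\in C$. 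Applying this to $b=P_B(x)$ gives $b\in B$ with $\langle x-b,c-b\rangle\le0$ for all $c\in B$, and applying it to $b=P_A(2b-x)$ gives $b\in A$ with $\langle (2b-x)-b,\,a-b\rangle=\langle b-x,\,a-b\rangle\le0$ for all $a\in A$. The first inequality says $x-b\in N_B(b)$ (the normal cone) and the second says $-(x-b)\in N_A(b)$; together with $b\in A\cap B$ this exhibits $b=P_B(x)\in A\cap B$, giving the inclusion $\mathrm{Fix}\,T_{B,A}\subseteq (P_B)^{-1}(A\cap B)$. For the reverse inclusion I would start from a point $b\in A\cap B$ and a direction $v$ with $v\in N_B(b)$ and $-v\in N_A(b)$, set $x:=b+v$, and check by the same variational inequalities that $P_B(x)=b$ and $P_A(2b-x)=b$, so $x\in\mathrm{Fix}\,T_{B,A}$.

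\textbf{Main obstacle.} The routine algebra of collapsing $\tfrac12(\mathrm{Id}+R_AR_B)(x)=x$ to $R_AR_B(x)=x$ is immediate; the slightly delicate part is the clean bookkeeping of the two normal-cone conditions and verifying that together they force $P_B(x)\in A\cap B$ in exactly the form the cited corollary states. The key obstacle is therefore matching the characterization to the precise statement in \cite[Corollary 3.9]{bauschke2004finding}: depending on whether that corollary phrases the fixed-point set as $(P_B)^{-1}(A\cap B)$, as a sum $(A\cap B)+N_{A-B}(0)$, or via the displacement/difference mapping, I would need to reconcile the normal-cone description above with their formulation. Since $A\cap B\neq\emptyset$ is assumed, the fixed-point set is nonempty (any point of $A\cap B$ is fixed, taking $v=0$), and no regularity beyond closedness and convexity is needed; the whole argument is elementary once the projection variational inequalities are invoked, so I would simply cite \cite{bauschke2004finding} for the stated closed form and, if desired, include the short computation above to make the excerpt self-contained.
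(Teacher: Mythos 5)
The paper offers no proof of this lemma at all: it is quoted verbatim, with the missing display being $P_{A}\textup{Fix}\,T_{A,B}=A\cap B$, from \cite[Corollary 3.9]{bauschke2004finding}. So the only question is whether your direct argument is sound. Its core is: $x\in\textup{Fix}\,T_{B,A}$ iff $R_{A}R_{B}(x)=x$ iff $P_{A}(2b-x)=b$ where $b:=P_{B}(x)$, and the two projection variational inequalities then give $b\in A\cap B$ together with $x-b\in N_{B}(b)$ and $-(x-b)\in N_{A}(b)$. This is correct, and it does establish exactly the lemma's conclusion (up to the harmless swap of orientation, $P_{B}\textup{Fix}\,T_{B,A}=A\cap B$ versus the paper's $P_{A}\textup{Fix}\,T_{A,B}=A\cap B$): the forward direction gives $P_{B}(\textup{Fix}\,T_{B,A})\subseteq A\cap B$, and since every $b\in A\cap B$ is itself a fixed point (take $v=0$), the reverse inclusion follows.

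However, your headline claim $\textup{Fix}\,T_{B,A}=(P_{B})^{-1}(A\cap B)$ is false; only the inclusion $\textup{Fix}\,T_{B,A}\subseteq(P_{B})^{-1}(A\cap B)$ holds. Your own derivation shows that membership in $\textup{Fix}\,T_{B,A}$ requires the \emph{second} normal-cone condition $-(x-P_{B}(x))\in N_{A}(P_{B}(x))$, which a generic point of $(P_{B})^{-1}(A\cap B)$ does not satisfy. Concretely, in $\mathbb{R}^{2}$ let $A$ be the horizontal axis and $B$ the vertical axis: then $R_{A}R_{B}=-\textup{Id}$, so $T_{B,A}=0$ and $\textup{Fix}\,T_{B,A}=\{0\}=A\cap B$, whereas $(P_{B})^{-1}(A\cap B)=(P_{B})^{-1}(\{0\})$ is the entire horizontal axis. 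The correct closed form in \cite{bauschke2004finding} is $\textup{Fix}\,T=(A\cap B)+N_{\overline{A-B}}(0)$, which matches your pairs $(b,v)$ with $v\in N_{B}(b)\cap\bigl(-N_{A}(b)\bigr)$; note also that your ``reverse inclusion'' paragraph actually proves the converse of this normal-cone description, not of the $(P_{B})^{-1}$ claim, so the intended equality is never argued. Since the paper (in the BI-DR convergence proof) uses only the projection identity $P_{A}\textup{Fix}\,T_{A,B}=A\cap B$, this error does not propagate, but the asserted set equality should be deleted and replaced by the normal-cone (or one-sided inclusion) statement.
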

\begin{equation}
P_{A}\textup{Fix}T_{A,B}=A\cap B.\label{eq:3.9}
\end{equation}

If the right-hand side of (\ref{eq:3.9}) has nonempty interior additional
information is available from \cite[Corollary 4.3.17(ii)]{Ceg-book}.
\begin{lem}
\label{lem:16}\cite[Corollary 4.3.17(ii)]{Ceg-book} Let $A,B\subseteq\mathcal{H}$
be closed and convex sets and let $T_{A,B}$ be their 2-set-DR operator.
If $\textup{int}\:(A\cap B)\neq\textrm{Ø}$ then $A\cap B=\textup{Fix}T_{A,B}$.
\end{lem}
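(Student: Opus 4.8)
The plan is to reduce the computation of the fixed-point set of the averaged operator $T_{A,B}$ to that of a composition of two reflections, and then to invoke Theorem \ref{thm:1} for quasi-nonexpansive operators. Following Definition \ref{def:2-DR-1} (with the roles of $A$ and $B$ interchanged), $T_{A,B}=\tfrac12(\mathrm{Id}+R_BR_A)$, so the fixed-point equation $T_{A,B}(x)=x$ is equivalent to $\tfrac12\bigl(x+R_BR_A(x)\bigr)=x$, i.e. to $R_BR_A(x)=x$. Hence $\mathrm{Fix}\,T_{A,B}=\mathrm{Fix}(R_BR_A)$, and it suffices to prove $\mathrm{Fix}(R_BR_A)=A\cap B$. (The orientation is immaterial: had we used $R_AR_B$ the same argument applies with the two reflections swapped.)

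Next I would record the elementary properties of the reflection operators. Since $P_A$ and $P_B$ are firmly nonexpansive, the reflections $R_A=2P_A-\mathrm{Id}$ and $R_B=2P_B-\mathrm{Id}$ are nonexpansive (a reflection of a firmly nonexpansive operator is nonexpansive, as already noted in the introduction). Moreover $R_A(x)=x\Leftrightarrow P_A(x)=x\Leftrightarrow x\in A$, so $\mathrm{Fix}\,R_A=A$, and likewise $\mathrm{Fix}\,R_B=B$. Because $A\cap B\neq\emptyset$ forces $A,B\neq\emptyset$, each reflection is a nonexpansive operator possessing a fixed point and is therefore quasi-nonexpansive with the fixed-point sets just identified.

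Now I would apply Theorem \ref{thm:1} to the two operators $U_1:=R_A$ and $U_2:=R_B$, whose composition is $U:=U_2U_1=R_BR_A$. Their common fixed-point set is $C:=\mathrm{Fix}\,R_A\cap\mathrm{Fix}\,R_B=A\cap B$, and by hypothesis $\mathrm{int}\,(A\cap B)=\mathrm{int}\,C\neq\emptyset$. Theorem \ref{thm:1} then yields $\mathrm{Fix}(R_BR_A)=\mathrm{Fix}\,R_A\cap\mathrm{Fix}\,R_B=A\cap B$, and combining this with the reduction of the first paragraph gives $\mathrm{Fix}\,T_{A,B}=A\cap B$, as claimed.

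The only points requiring care are the two reductions above: that the relaxation $\tfrac12(\mathrm{Id}+R_BR_A)$ shares its fixed points with $R_BR_A$ (immediate from Definition \ref{def:relax-and-reflect-1}), and that the interior hypothesis is genuinely what upgrades the always-valid inclusion $A\cap B\subseteq\mathrm{Fix}(R_BR_A)$ to equality — for two distinct affine sets meeting in a lower-dimensional flat the fixed-point set of $R_BR_A$ is strictly larger, precisely the degenerate situation excluded by $\mathrm{int}\,(A\cap B)\neq\emptyset$ and handled by Theorem \ref{thm:1}. As a self-contained alternative not citing Theorem \ref{thm:1}, one could argue directly from the fixed-point equation that for $x\in\mathrm{Fix}\,T_{A,B}$ the point $a:=P_A(x)$ lies in $A\cap B$ (the content of Lemma \ref{lem:fixed-points-2-RD}) and that the defect $v:=x-a$ satisfies $v\in N_A(a)$ and $-v\in N_B(a)$; choosing $c\in\mathrm{int}\,(A\cap B)$ with a small ball about $c$ contained in $A\cap B$ forces $\langle v,u\rangle=0$ for all small $u$, hence $v=0$ and $x=a\in A\cap B$. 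The main obstacle in this second route is the normal-cone bookkeeping, which the Theorem \ref{thm:1} approach sidesteps entirely; for that reason I would present the first argument as the proof.
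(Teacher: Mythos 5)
Your proof is correct: the reduction $\mathrm{Fix}\,T_{A,B}=\mathrm{Fix}(R_BR_A)$ (with the orientation of the composition handled correctly per Definition \ref{def:2-DR-1}), the identifications $\mathrm{Fix}\,R_A=A$ and $\mathrm{Fix}\,R_B=B$ with both reflections nonexpansive and hence quasi-nonexpansive, and the appeal to Theorem \ref{thm:1} under the hypothesis $\mathrm{int}\,(A\cap B)\neq\emptyset$ are all sound. The paper gives no proof of this lemma, citing \cite[Corollary 4.3.17(ii)]{Ceg-book} instead, but its own proof of the $m$-set generalization (Corollary \ref{corollary 21-1}) runs along exactly the line you chose --- quasi-nonexpansive reflections with $\mathrm{Fix}\,R_{C_i}=C_i$, the interior hypothesis upgrading them to strict quasi-nonexpansiveness so that Theorem \ref{thm:2} applies, and the averaged operator sharing its fixed points with the composite reflection --- so your argument is essentially the paper's approach specialized to two sets, with Theorem \ref{thm:1} packaging the same ingredients.
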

A corner stone of our subsequent results is the recognition that the
2-set-DR operator is not only firmly nonexpansive, as stated in \cite[Fact 2.2]{btam14},
but also strongly quasi-nonexpansive.
\begin{lem}
\label{lem:2}Every 2-set-DR operator $T_{A,B}$ is:
\begin{enumerate}
\item Strongly quasi-nonexpansive (SQNE),
\item Strongly nonexpansive (SNE).
\end{enumerate}
\end{lem}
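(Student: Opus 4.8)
The plan is to reduce both assertions to the single fact, recorded as \cite[Fact 2.2]{btam14}, that $T_{A,B}$ is firmly nonexpansive (FNE), together with the standard algebraic reformulation of firm nonexpansiveness. First I would recall why $T_{A,B}$ is FNE: the metric projections $P_A,P_B$ are FNE, so the reflections $R_A=2P_A-\text{Id}$ and $R_B=2P_B-\text{Id}$ are nonexpansive; hence the composition $R_AR_B$ is nonexpansive and its $\tfrac12$-average $T_{A,B}=\tfrac12(\text{Id}+R_AR_B)$ is FNE. The tool I would lean on throughout is the equivalent form of FNE: an operator $T$ satisfies $\langle T(x)-T(y),x-y\rangle\ge\|T(x)-T(y)\|^2$ for all $x,y$ if and only if
\[\|T(x)-T(y)\|^2+\|(\text{Id}-T)(x)-(\text{Id}-T)(y)\|^2\le\|x-y\|^2,\]
which one checks by expanding $\|u\|^2+\|v\|^2$ with $u:=T(x)-T(y)$ and $v:=(x-y)-u$.

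For part (1) I would first secure a fixed point: under the consistency hypothesis $A\cap B\neq\emptyset$, Lemma \ref{lem:fixed-points-2-RD} gives $P_A\text{Fix}T_{A,B}=A\cap B\neq\emptyset$, so $\text{Fix}T_{A,B}\neq\emptyset$ and Definition \ref{def:QNE} is applicable. Specializing the displayed inequality to $y=z\in\text{Fix}T_{A,B}$, where $(\text{Id}-T_{A,B})(z)=0$, yields
\[\|T_{A,B}(x)-z\|^2+\|x-T_{A,B}(x)\|^2\le\|x-z\|^2,\]
which is exactly the $\alpha$-SQNE inequality of Definition \ref{def:QNE} with $\alpha=1>0$. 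Thus $T_{A,B}$ is SQNE.

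For part (2), $T_{A,B}$ is nonexpansive by Proposition \ref{prop:fne-is-ne}. To verify the defining limit condition of SNE, abbreviate $T:=T_{A,B}$ and take sequences $\{x^k\}$, $\{y^k\}$ with $(x^k-y^k)$ bounded and $\|x^k-y^k\|-\|T(x^k)-T(y^k)\|\to0$. The displayed FNE inequality rearranges into
\[\|(x^k-T(x^k))-(y^k-T(y^k))\|^2\le\big(\|x^k-y^k\|-\|T(x^k)-T(y^k)\|\big)\big(\|x^k-y^k\|+\|T(x^k)-T(y^k)\|\big),\]
using the difference-of-squares factorization of $\|x^k-y^k\|^2-\|T(x^k)-T(y^k)\|^2$. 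The first factor on the right tends to $0$ by hypothesis and the second stays bounded (nonexpansiveness plus boundedness of $x^k-y^k$), so the right-hand side tends to $0$. Hence $(x^k-y^k)-(T(x^k)-T(y^k))\to0$, which is precisely the SNE condition.

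The two inequality manipulations are routine; the only delicate point --- and the main obstacle if treated carelessly --- is the role of the fixed-point set. The SNE conclusion is defined without reference to fixed points and holds unconditionally, whereas the SQNE conclusion is meaningful only once $\text{Fix}T_{A,B}\neq\emptyset$. This is why I would explicitly invoke Lemma \ref{lem:fixed-points-2-RD} (equivalently, the standing assumption that the feasibility problem is consistent) before applying the firm-nonexpansiveness inequality at a fixed point.
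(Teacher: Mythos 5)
Your proof is correct. It differs from the paper's only in level of detail: the paper disposes of both claims by citation, deducing part (1) from \cite[Corollary 2.2.9]{Ceg-book} (a $\lambda$-relaxation of a firmly nonexpansive operator with a fixed point is $\frac{2-\lambda}{\lambda}$-SQNE, applied with $\lambda=1$) and part (2) from \cite[Theorem 2.3.4]{Ceg-book} with $\lambda=1$ (relaxations of FNE operators are SNE), taking the firm nonexpansiveness of $T_{A,B}$ as known from \cite[Fact 2.2]{btam14}. You instead re-derive exactly these two implications from first principles via the identity
\begin{equation*}
\Vert T(x)-T(y)\Vert^{2}+\Vert(\textup{Id}-T)(x)-(\textup{Id}-T)(y)\Vert^{2}\leq\Vert x-y\Vert^{2},
\end{equation*}
which is indeed equivalent to firm nonexpansiveness, and your two specializations (setting $y=z\in\textup{Fix}\,T_{A,B}$ for the $1$-SQNE inequality; the difference-of-squares estimate for the SNE limit condition) are both valid --- the latter is essentially the classical Bruck--Reich argument. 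Your route buys self-containedness and exhibits the explicit constant $\alpha=1$ in Definition \ref{def:QNE}(iv), which the citation-only proof leaves implicit; the paper's route buys brevity and the extra generality of arbitrary relaxation parameters. One small simplification: to secure $\textup{Fix}\,T_{A,B}\neq\textrm{\O}$ you do not need Lemma \ref{lem:fixed-points-2-RD}, since any $z\in A\cap B$ satisfies $R_{B}(z)=z$ and $R_{A}(z)=z$, hence $T_{A,B}(z)=z$ directly; the cited lemma characterizes the projection of the entire fixed point set, which is more than is needed here. Your closing caveat --- that SNE is unconditional while SQNE presupposes a fixed point --- is well placed and consistent with the paper's standing assumption that the CFP is consistent.
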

\begin{proof}
(i) Follows from \cite[Corollary 2.2.9]{Ceg-book}.

(ii) By \cite[Theorem 2.3.4]{Ceg-book} with $\lambda=1$ and $T=T_{1}$
we obtain that \textit{$T$ }is SNE.
\end{proof}
Our convergence theorems for the above two algorithmic schemes can
now be stated and proved.
\begin{thm}
\textbf{\label{thm:7}(SA-DR)}. Let $C_{1},C_{2},\ldots,C_{m}\subseteq\mathcal{H}$
be closed and convex sets that define a convex feasibility problem
(CFP). If 
\begin{equation}
\textup{int}\:\bigcap_{i\in I}C_{i}\neq\textrm{Ø}\label{eq:int}
\end{equation}
then for any $x^{0}\in\mathcal{H}$, any sequence $\left\{ x^{k}\right\} _{k=0}^{\infty}$,
generated by the SA-DR Algorithm 1, with strings such that $I=I_{1}\cup I_{2}\cup\ldots\cup I_{M}$,
converges strongly to a point $x^{*}\in\bigcap_{i\in I}C_{i}$.\end{thm}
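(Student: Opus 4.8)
The plan is to recognize that Algorithm 1 is nothing but the fixed-point iteration $x^{k+1}=U(x^k)$, so that $x^k=U^k(x^0)$, of the single operator
\[
U:=\sum_{t=1}^{M}w_t T_t,\qquad T_t:=T_{i_{\gamma(t)}^t,i_1^t}T_{i_{\gamma(t)-1}^t,i_{\gamma(t)}^t}\cdots T_{i_1^t,i_2^t}.
\]
I would first pin down the analytic properties of $U$ (its fixed-point set together with its ``quasi'' and strong-nonexpansiveness), and only afterwards invoke the interior assumption (\ref{eq:int}) to upgrade convergence from weak to strong.

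Second, I would assemble the properties of $U$ from those of its building blocks. By Lemma \ref{lem:2} each $2$-set-DR operator $T_{i,j}=T_{C_i,C_j}$ is SQNE and SNE, and since $\textup{int}\,(C_i\cap C_j)\supseteq\textup{int}\,\bigcap_{i\in I}C_i\neq\emptyset$, Lemma \ref{lem:16} gives $\textup{Fix}\,T_{i,j}=C_i\cap C_j\supseteq C$, where $C:=\bigcap_{i\in I}C_i$. By Proposition \ref{prop:sQNE}(iii) each $T_{i,j}$ is sQNE, hence $C$-sQNE by Proposition \ref{prop:sQNE}(i). Applying Theorem \ref{thm:2}(2) to the composition defining $T_t$, and noting that the fixed-point sets of the consecutive pairs in the string intersect to $\bigcap_{i\in I_t}C_i$, yields $\textup{Fix}\,T_t=\bigcap_{i\in I_t}C_i$ and that $T_t$ is $C$-sQNE. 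Applying Theorem \ref{thm:2}(1) to the convex combination $U$ with the (appropriate) constant weights $w_t$, and using $I=\bigcup_{t}I_t$, then gives
\[
\textup{Fix}\,U=\bigcap_{t=1}^{M}\textup{Fix}\,T_t=\bigcap_{t=1}^{M}\bigcap_{i\in I_t}C_i=\bigcap_{i\in I}C_i=C,
\]
and that $U$ is $C$-sQNE, in particular quasi-nonexpansive. Moreover, by the stated facts that compositions and convex combinations of SNE operators are again SNE, $U$ is SNE, hence nonexpansive, hence continuous.

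Third comes the convergence argument. Since $U$ is quasi-nonexpansive with $\textup{Fix}\,U=C$, the sequence $\{x^k\}=\{U^k(x^0)\}$ is Fej\'er monotone with respect to $C$, that is $\|x^{k+1}-z\|\leq\|x^k-z\|$ for every $z\in C$. The essential use of (\ref{eq:int}) is then as follows: fix $c\in\textup{int}\,C$ and $\rho>0$ such that $\|y-c\|\le\rho$ forces $y\in C$; for indices $k,p$ with $x^{k+p}\neq x^k$ put $u:=(x^{k+p}-x^k)/\|x^{k+p}-x^k\|$, so that $c+\rho u\in C$, and apply Fej\'er monotonicity with $z=c+\rho u$. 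Expanding both squared norms cancels the $\rho^2$ terms and collapses the cross terms to $2\rho\|x^{k+p}-x^k\|\leq\|x^k-c\|^2-\|x^{k+p}-c\|^2$, using $\langle x^{k+p}-x^k,u\rangle=\|x^{k+p}-x^k\|$. Because $\{\|x^k-c\|\}$ is nonincreasing and bounded below it converges, so the right-hand side tends to $0$ uniformly in $p$ as $k\to\infty$; hence $\{x^k\}$ is Cauchy and converges strongly to some $x^*$. Finally, continuity of $U$ gives $U(x^*)=\lim_k U(x^k)=\lim_k x^{k+1}=x^*$, so $x^*\in\textup{Fix}\,U=C=\bigcap_{i\in I}C_i$, which is the assertion.

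I expect the main obstacle to be precisely this last passage from weak to strong convergence. The convergence theorems available in Section \ref{sec:preliminaries} give only weak convergence in a general Hilbert space (Theorem \ref{thm:6-opial}) or require finite dimension (Theorem \ref{thm:5}); the interior hypothesis (\ref{eq:int}) must therefore be exploited directly, through the Fej\'er-monotone Cauchy estimate above, to force strong convergence. The remaining delicate point is to secure that $\textup{Fix}\,U$ is \emph{exactly} $C$, so that the strong limit is genuinely a solution of the CFP rather than merely a fixed point of $U$; this is what the identification via Theorem \ref{thm:2} provides.
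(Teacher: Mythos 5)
Your proposal is correct in substance but follows a genuinely different route from the paper in both of its halves. For the fixed-point identification, the paper never uses Lemma \ref{lem:16} at the level of individual pairs: it keeps the sets $\textup{Fix}\,T_{i,j}$ abstract, invokes Theorem \ref{thm:1} (which needs $\textup{int}\,\Gamma_{t}\neq\emptyset$) to show each string operator $T_{t}$ is $\textup{int}\,\Gamma_{t}$-strictly quasi-nonexpansive with $\textup{Fix}\,T_{t}=\Gamma_{t}$, applies Theorem \ref{thm:2}(i) to the convex combination, and only at the very end identifies $\Gamma=\bigcap_{i\in I}C_{i}$ by repeated use of Lemma \ref{lem:16}. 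You instead identify $\textup{Fix}\,T_{i,j}=C_{i}\cap C_{j}$ immediately via Lemma \ref{lem:16} and get strictness from Lemma \ref{lem:2}(i) through Proposition \ref{prop:sQNE}(iii), so Theorem \ref{thm:1} is never needed; both paths are legitimate. The larger divergence is the convergence argument: the paper first proves weak convergence using the demi-closedness principle, asymptotic regularity of the strongly nonexpansive operator $\sum_{t}w_{t}T_{t}$ (Theorem \ref{thm:4}), and a fixed-point iteration theorem, and then upgrades to strong convergence by citing \cite[Proposition 5.10]{BC11} for Fej\'er monotone sequences with respect to a set with nonempty interior. You bypass weak convergence entirely: you inline the proof of that very proposition, namely the classical estimate showing such a Fej\'er monotone sequence is Cauchy, and identify the limit by continuity of $U$. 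This buys a leaner, more self-contained proof: you never need demi-closedness or asymptotic regularity (you invoke strong nonexpansiveness but use it only through plain nonexpansiveness), and you sidestep a wrinkle in the paper's own write-up, which cites Theorem \ref{thm:5} (stated only for finite-dimensional $\mathcal{H}$) to obtain weak convergence in a general Hilbert space. The paper's modular route, by contrast, isolates machinery that it reuses in the BI-DR theorem, where no interior assumption is available in part (i).

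One concrete slip to fix in your Cauchy estimate: with your choice $u:=(x^{k+p}-x^{k})/\Vert x^{k+p}-x^{k}\Vert$, the test point must be $z=c-\rho u$ (equivalently, keep $z=c+\rho u$ but define $u$ with the opposite orientation). As written, $z=c+\rho u$ yields only
\begin{equation}
\Vert x^{k+p}-c\Vert^{2}-\Vert x^{k}-c\Vert^{2}\leq2\rho\Vert x^{k+p}-x^{k}\Vert,
\end{equation}
which is vacuous since the left-hand side is nonpositive; with $z=c-\rho u$ the cross terms collapse with the correct sign and give the desired
\begin{equation}
2\rho\Vert x^{k+p}-x^{k}\Vert\leq\Vert x^{k}-c\Vert^{2}-\Vert x^{k+p}-c\Vert^{2},
\end{equation}
after which your monotone-limit argument shows the sequence is Cauchy and the rest of your proof goes through unchanged.
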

\begin{proof}
From Lemma \ref{lem:2} we have that the 2-set-DR operators $T_{i_{\ell}^{t},i_{\ell+1}^{t}}$for
all $\ell=1,2,\ldots,\gamma(t)-1$ and $T_{i_{\gamma(t)}^{t},i_{1}^{t}}$
are strongly quasi-nonexpansive. Denoting the intersection of all
fixed points sets of all 2-set-DR operators within the $t$-th string
by 
\begin{equation}
\Gamma_{t}:=\left(\bigcap_{\ell=1}^{\gamma(t)-1}\textup{Fix}T_{i_{\ell}^{t},i_{\ell+1}^{t}}\right)\bigcap\textup{Fix}T_{i_{\gamma(t)}^{t},i_{1}^{t}},\label{eq:intersect-T}
\end{equation}
we first show that $\textup{int}\:\Gamma_{t}$ is nonempty. Each point
in the nonempty intersection $\bigcap_{i\in I}C_{i}$ is a fixed point
for any 2-set-DR operator with respect to any pair of sets from the
family of sets in the CFP, meaning that $\bigcap_{i\in I}C_{i}\neq\textrm{Ø}$
is included in the fixed points set of any of the operators that appear
in the right-hand side of (\ref{eq:intersect-T}). So we have 
\begin{equation}
\Gamma_{t}\supseteq\bigcap_{i=1}^{\gamma(t)}C_{i}\neq\textrm{Ø},
\end{equation}
and, by (\ref{eq:int}), 

\begin{equation}
\textup{int}\:\Gamma_{t}\neq\textrm{Ø}.
\end{equation}

Identifying the 2-set-DR operators in a string with the individual
operators in Theorem \ref{thm:1}, and since any SQNE operator is
also QNE, the conditions of Theorem \ref{thm:1} are met and we conclude
that any string operator $T_{t}$ in (\ref{eq:string}) is int$\Gamma_{t}$-strictly
quasi-nonexpansive, and 
\begin{equation}
\textup{Fix}T_{t}=\Gamma_{t},
\end{equation}
for all $t=1,2,\ldots,M$. Looking at the intersection of the fixed
point sets of all string operators 
\begin{equation}
\Gamma:=\bigcap_{t=1}^{M}\textup{Fix}T_{t},
\end{equation}
we conclude again, re-applying the previous considerations, that 
\begin{equation}
\Gamma\supseteq\bigcap_{i\in I}C_{i}\neq\textrm{Ø},
\end{equation}
and that for any subset $\Theta$ of $\textup{int}\:\Gamma$, the
operators $T_{t}$ are $\Theta$-strictly quasi-nonexpansive, since
they are int$\Gamma_{t}$-strictly quasi-nonexpansive. So, by Theorem
\ref{thm:2}(i), 
\begin{equation}
Fix\left(\sum_{t=1}^{M}w_{t}T_{t}\right)=\Gamma,\label{eq:3.18}
\end{equation}
and $\sum_{t=1}^{M}w_{t}T_{t}$ is $\textup{int}\:\Gamma$-strictly
quasi-nonexpansive, which in turn implies that it is a quasi-nonexpansive
operator. By Lemma \ref{lem:2}(ii) the 2-set-DR operators $T_{i_{\ell}^{t},i_{\ell+1}^{t}}$for
all $\ell=1,2,\ldots,\gamma(t)-1$ and $T_{i_{\gamma(t)}^{t},i_{1}^{t}}$
are strongly nonexpansive, therefore, their composition and the convex
combination of their compositions $\sum_{t=1}^{M}w_{t}T_{t}$ are
strongly nonexpansive, thus, nonexpansive, see Definition \ref{def:fejer}(v).
Using the demi-closedness principle embodied in \cite[Lemma 3.2.5]{Ceg-book}
and \cite[Definition 3.2.6]{Ceg-book} for the operator $\sum_{t=1}^{M}w_{t}T_{t}$,
the operator $(\sum_{t=1}^{M}w_{t}T_{t}-\textup{Id})$ is demi-closed
at 0. By Theorem \ref{thm:4}, $\sum_{t=1}^{M}w_{t}T_{t}$ is asymptotically
regular. Finally, by Theorem \ref{thm:5}, \textit{$\left\{ x^{k}\right\} _{k=0}^{\infty}$},
generated by the SA-DR Algorithm 1, converges weakly to a point

\begin{equation}
x^{*}\in\textup{Fix}\left(\sum_{t=1}^{M}w_{t}T_{t}\right)=\bigcap_{t=1}^{M}\textup{Fix}T_{t}=\bigcap_{t=1}^{M}\Gamma_{t}=\Gamma.
\end{equation}
Using the assumption of (\ref{eq:int}) that $\textup{int}\:\bigcap_{i\in I}C_{i}\neq\textrm{Ø}$
we apply \cite[Corollary 4.3.17(ii)]{Ceg-book} to all pairs of sets
in each string $\Gamma_{t}$ and obtain 
\begin{equation}
\Gamma=\bigcap_{i\in I}C_{i}.
\end{equation}
Therefore, $x^{*}\in\bigcap_{i\in I}C_{i}$. 

\noindent Since $\sum_{t=1}^{M}w_{t}T_{t}$ is a quasi-nonexpansive
operator, it is Fejér monotone. Therefore, \textit{$\left\{ x^{k}\right\} _{k=0}^{\infty}$},
generated by the SA-DR Algorithm 1 is Fejér monotone sequence. By
\cite[Proposition 5.10]{BC11}, the convergence of \textit{$\left\{ x^{k}\right\} _{k=0}^{\infty}$
}to $x^{*}$ is strong.
\end{proof}
Note that the operator $T_{t}$ is nonexpansive, but without the assumption
$\textup{int}\:\bigcap_{i\in I}C_{i}\neq\textrm{Ø}$ it need not be
$\Theta$-strictly quasi-nonexpansive which would have prevented us
from getting (\ref{eq:3.18}) from Theorem \ref{thm:2}(i), which
is a corner stone in the proof of Theorem 17.

For the Block-Iterative Douglas-Rachford algorithm we prove the following
convergence result.
\begin{thm}
\noindent \textbf{(BI-DR)}. Let $C_{1},C_{2},\ldots,C_{m}\subseteq\mathcal{H}$
be closed and convex sets with a nonempty intersection. For any $x^{0}\in\mathcal{H}$,
the sequence $\left\{ y^{k}\right\} _{k=0}^{\infty}$ of iterates
of the BI-DR Algorithm 2 with $I=I_{1}\cup I_{2}\cup\ldots\cup I_{M}$,
after full sweeps through all blocks, converges 

\noindent (i) weakly to a point $y^{*}$ such that $P_{C_{i_{\ell}^{t}}}(y^{*})\in\bigcap_{\ell=1}^{\gamma(t)}C_{i_{\ell}^{t}}$
for $\ell=1,2,\ldots,\gamma(t)$ and $t=1,2,\ldots,M,$ and 

\noindent (ii) strongly to a point $y^{*}$ such that $y^{*}\in\bigcap_{i=1}^{m}C_{i}$
if the additional assumption $\textup{int}\:\bigcap_{i\in I}C_{i}\neq\textrm{Ø}$
holds.\end{thm}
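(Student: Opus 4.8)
The plan is to rewrite one full sweep of Algorithm~2 as a single operator and then feed it into the Opial/asymptotic-regularity chain of Section~\ref{sec:preliminaries}, isolating the genuinely new work in the identification of the weak limit. For each block $t$ set
\[
V_t:=\sum_{\ell=1}^{\gamma(t)-1}w_\ell^t\,T_{i_\ell^t,i_{\ell+1}^t}+w_{\gamma(t)}^t\,T_{i_{\gamma(t)}^t,i_1^t},\qquad V:=V_MV_{M-1}\cdots V_1,
\]
so that, under the cyclic control $t(k)=k\,\textup{mod}\,M+1$, a full sweep is one application of $V$ and the post-sweep iterates are $y^k=V^k(x^0)$. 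Each $2$-set-DR operator is firmly nonexpansive and strongly nonexpansive by Lemma~\ref{lem:2}, and every point of $\bigcap_{i\in I}C_i\neq\emptyset$ is a common fixed point of all of them; hence $\bigcap_{i\in I}C_i\subseteq\textup{Fix}\,V_t$ for every $t$ and $\textup{Fix}\,V\neq\emptyset$.

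\emph{Weak convergence and location of the limit.} By Proposition~\ref{prop:fne-is-ne} each $V_t$ is firmly nonexpansive (a convex combination of firmly nonexpansive operators), and by \cite[Theorem 2.3.5]{Ceg-book} each $V_t$, being a convex combination of strongly nonexpansive operators, is strongly nonexpansive, as is their composition $V$. Thus $V$ is nonexpansive and, by Theorem~\ref{thm:4}, asymptotically regular, so Theorem~\ref{thm:6-opial} (Opial) yields $y^k\rightharpoonup y^*\in\textup{Fix}\,V$. To place $y^*$ I would prove $\textup{Fix}\,V=\bigcap_{t=1}^{M}\textup{Fix}\,V_t$: the inclusion $\supseteq$ is immediate, and for $\subseteq$ I would use that each firmly nonexpansive $V_t$ satisfies $\Vert V_t(x)-z\Vert^2\le\Vert x-z\Vert^2-\Vert V_t(x)-x\Vert^2$ for $z\in\bigcap_{i\in I}C_i$; summing this telescoping estimate along one sweep of a point $x\in\textup{Fix}\,V$ forces every intermediate point to equal $x$, i.e.\ $V_t(x)=x$ for all $t$. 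Theorem~\ref{thm:3} then gives $\textup{Fix}\,V_t=\Gamma_t$ with $\Gamma_t$ as in (\ref{eq:intersect-T}), so $y^*\in\bigcap_{t}\Gamma_t$; in particular $y^*\in\textup{Fix}\,T_{i_\ell^t,i_{\ell+1}^t}$ for every consecutive (cyclically indexed) pair of each block.

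\emph{The block-intersection property, and the main obstacle.} Fix a block $t$, write $i_{\gamma(t)+1}^t:=i_1^t$, and set $p_\ell:=P_{C_{i_\ell^t}}(y^*)$ for $\ell=1,\dots,\gamma(t)$. Applying Lemma~\ref{lem:fixed-points-2-RD} to each pair yields $p_\ell\in C_{i_\ell^t}\cap C_{i_{\ell+1}^t}$, so in particular $p_\ell\in C_{i_{\ell+1}^t}$. Since $p_{\ell+1}$ is the unique nearest point of $C_{i_{\ell+1}^t}$ to $y^*$, this gives $\Vert y^*-p_{\ell+1}\Vert\le\Vert y^*-p_\ell\Vert$ for each $\ell$; because the wrap-around pair closes the cycle, chaining these inequalities around the whole block forces all the distances $\Vert y^*-p_\ell\Vert$ to coincide, whence uniqueness of the metric projection gives $p_1=\cdots=p_{\gamma(t)}=:p$. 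As $p=p_\ell\in C_{i_\ell^t}$ for every $\ell$, we obtain $P_{C_{i_\ell^t}}(y^*)=p\in\bigcap_{j=1}^{\gamma(t)}C_{i_j^t}$, which is assertion (i). I expect this wrap-around/uniqueness step to be the crux: Lemma~\ref{lem:fixed-points-2-RD} only delivers pairwise intersections, and it is precisely the closing operator $T_{i_{\gamma(t)}^t,i_1^t}$ that promotes them to membership in the full block intersection.

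\emph{Strong convergence under the interior hypothesis.} If $\textup{int}\,\bigcap_{i\in I}C_i\neq\emptyset$ then a fortiori $\textup{int}(C_{i_\ell^t}\cap C_{i_{\ell+1}^t})\neq\emptyset$, so Lemma~\ref{lem:16} upgrades each pair to $\textup{Fix}\,T_{i_\ell^t,i_{\ell+1}^t}=C_{i_\ell^t}\cap C_{i_{\ell+1}^t}$; hence $\Gamma_t=\bigcap_{j=1}^{\gamma(t)}C_{i_j^t}$ and, since the blocks cover $I$, $\bigcap_t\Gamma_t=\bigcap_{i\in I}C_i$, so $y^*\in\bigcap_{i\in I}C_i$. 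Finally, $V$ is quasi-nonexpansive (nonexpansive with $\bigcap_{i\in I}C_i\subseteq\textup{Fix}\,V$), so $\{y^k\}$ is Fej\'er monotone with respect to $\bigcap_{i\in I}C_i$; as this set has nonempty interior, \cite[Proposition 5.10]{BC11} promotes the weak convergence to strong, the limit being the already identified point $y^*\in\bigcap_{i\in I}C_i$.
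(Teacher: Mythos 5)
Your proposal is correct, and it follows the same overall skeleton as the paper's proof: the same one-sweep decomposition (your $V_t$ is exactly the paper's $Q_t$, and the post-sweep sequence is $y^{k}=\left(\prod_{t=1}^{M}Q_t\right)^{k}(x^{0})$), the same route through asymptotic regularity and Opial's theorem (Theorem \ref{thm:6-opial}), the same use of Lemma \ref{lem:fixed-points-2-RD} to extract pairwise intersections, and the same endgame for (ii) via Lemma \ref{lem:16}, quasi-nonexpansiveness, Fej\'er monotonicity and \cite[Proposition 5.10]{BC11}. Within that skeleton, however, three of your steps genuinely differ from the paper's, all validly. First, for asymptotic regularity of the sweep operator the paper proves each $Q_t$ firmly nonexpansive and invokes \cite[Theorem 4.6]{bauschke2012compositions}, whereas you use strong nonexpansiveness (Lemma \ref{lem:2}(ii) together with closedness of the SNE class under convex combinations and compositions, \cite[Theorem 2.3.5]{Ceg-book}) and then Theorem \ref{thm:4}; this stays entirely inside Cegielski's toolkit and removes an external citation. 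Second, for $\textup{Fix}\left(\prod_{t=1}^{M}Q_t\right)=\bigcap_{t=1}^{M}\textup{Fix}\,Q_t$ the paper cites \cite[Lemma 2.3]{btam14}, while you give a self-contained telescoping argument from the $1$-SQNE inequality enjoyed by firmly nonexpansive operators at their fixed points: since any $z\in\bigcap_{i\in I}C_i$ lies in every $\textup{Fix}\,V_t$, summing the inequality along one sweep of a point $x$ with $V(x)=x$ annihilates all increments, forcing $V_t(x)=x$ for every $t$ --- this is sound. Third, and this is the crux of part (i): where the paper sums the variational characterizations of the projections and telescopes norms in the long display (\ref{eq:3.26}) to force all in-block projections of $y^{*}$ to coincide, you instead chain the distance inequalities $\Vert y^{*}-p_{\ell+1}\Vert\leq\Vert y^{*}-p_{\ell}\Vert$ (valid because $p_{\ell}\in C_{i_{\ell+1}^{t}}$ by Lemma \ref{lem:fixed-points-2-RD}) cyclically around the block and then conclude from uniqueness of the metric projection: once all distances are equal, $p_{\ell}$ attains $d\bigl(y^{*},C_{i_{\ell+1}^{t}}\bigr)$ and so must equal $p_{\ell+1}$. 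This is a more elementary, purely metric replacement for the paper's inner-product computation, and like the paper's it correctly identifies the wrap-around operator $T_{i_{\gamma(t)}^{t},i_{1}^{t}}$ as the ingredient that closes the cycle. The only blemish is attributional: firm nonexpansiveness of the 2-set-DR operator is not part of Lemma \ref{lem:2} (which gives SQNE and SNE); it comes from \cite[Fact 2.2]{btam14} or \cite[Corollary 4.3.17(iv)]{Ceg-book}, as the paper cites.
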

\begin{proof}
\noindent (i) Define \textbf{$Q_{t}:=\sum_{\ell=1}^{\gamma(t)-1}w_{\ell}^{t}T_{i_{\ell}^{t},i_{\ell+1}^{t}}+w_{\gamma(t)}^{t}T_{i_{\gamma(t)}^{t},i_{1}^{t}}$}
and let us look at the sequence
\begin{equation}
y^{k+1}=Q_{M}Q_{M-1}\cdots Q_{2}Q_{1}\left(y^{k}\right):=\left(\prod_{t=1}^{M}Q_{t}\right)\left(y^{k}\right),\label{eq:y-k}
\end{equation}

\noindent wherein the order of multiplication of operators is as indicated.
By \cite[Corollary 4.3.17(iv)]{Ceg-book}, the operators on the right-hand
sides of (\ref{eq:z-el-1}) and (\ref{eq:z-el-2}) are firmly nonexpansive,
for all $\ell=1,2,\ldots,\gamma(t)$, so, by Proposition \ref{prop:fne-is-ne},
they are nonexpansive. Each point in the nonempty intersection $\bigcap_{i\in I}C_{i}$,
is a fixed point for each 2-set-DR operator, thus, the operators on
the right-hand sides of (\ref{eq:z-el-1}) and (\ref{eq:z-el-2})
have a common fixed point. Then by Theorem \ref{thm:3}, for $w_{\ell}^{t}>0,$
for all $\ell=1,2,\ldots,\gamma(t)$, and $\sum_{\ell=1}^{\gamma(t)}w_{\ell}^{t}=1$,

\noindent 
\begin{equation}
\textup{Fix}Q_{t}=\left(\bigcap_{\ell=1}^{\gamma(t)-1}\textup{Fix}T_{i_{\ell}^{t},i_{\ell+1}^{t}}\right)\bigcap\textup{Fix}T_{i_{\gamma(t)}^{t},i_{1}^{t}}\supseteq\bigcap_{i=1}^{m}C_{i}\neq\textrm{Ø.}\label{eq:last-1}
\end{equation}

\noindent By Proposition \ref{prop:fne-is-ne} the operator $Q_{t}$
is firmly nonexpansive, thus, nonexpansive. Furthermore, by (\ref{eq:last-1})
it has a fixed point and, so, by Theorem \ref{thm:4} it is asymptotically
regular. By (\ref{eq:last-1}) 
\begin{equation}
\bigcap_{t=1}^{M}\left(\textup{Fix}Q_{t}\right)\supseteq\bigcap_{i=1}^{m}C_{i}\neq\textrm{Ø},
\end{equation}

\noindent so, by \cite[Lemma 2.3]{btam14}
\begin{equation}
\textup{Fix}\left(\prod_{t=1}^{M}Q_{t}\right)=\bigcap_{t=1}^{M}\left(\textup{Fix}Q_{t}\right).\label{eq:3.24}
\end{equation}
By \cite[Theorem 4.6]{bauschke2012compositions} the operator $\prod_{t=1}^{M}Q_{t}$
is asymptotically regular. Since the composition of firmly nonexpansive
operators is always nonexpansive, by Theorem \ref{thm:6-opial}, \textit{$\left\{ y^{k}\right\} _{k=0}^{\infty}$,}
generated by (\ref{eq:y-k}) and the BI-DR Algorithm 2, converges
weakly to a point $y^{*}$ such that: 
\begin{equation}
y^{*}\in\textup{Fix}\left(\prod_{t=1}^{M}Q_{t}\right).\label{eq:3.25}
\end{equation}
By (\ref{eq:3.25}), (\ref{eq:3.24}) and (\ref{eq:last-1}) we have:
\begin{equation}
y^{*}\in\bigcap_{t=1}^{M}\left(\left(\bigcap_{\ell=1}^{\gamma(t)-1}\textup{Fix}T_{i_{\ell}^{t},i_{\ell+1}^{t}}\right)\bigcap\textup{Fix}T_{i_{\gamma(t)}^{t},i_{1}^{t}}\right).\label{eq:3.26-1}
\end{equation}
By Lemma 15, $P_{C_{i_{\ell}^{t}}}\textup{Fix}T_{i_{\ell}^{t},i_{\ell+1}^{t}}=C_{i_{\ell}^{t}}\bigcap C_{i_{\ell+1}^{t}}$
for $\ell=1,2,\ldots,\gamma(t)-1,$ and $P_{C_{i_{\gamma(t)}^{t}}}\textup{Fix}T_{i_{\gamma(t)}^{t},i_{1}^{t}}=C_{i_{\gamma(t)}^{t}}\bigcap C_{i_{1}^{t}}.$
So, $P_{C_{i_{\ell}^{t}}}(y^{*})\in C_{i_{\ell+1}^{t}}$ for $\ell=1,2,\ldots,\gamma(t)-1,$
and $P_{C_{i_{\gamma(t)}^{t}}}(y^{*})\in C_{i_{1}^{t}}$. By the characterization
of projections, e.g., \cite[Theorem 1.2.4]{Ceg-book} we prove the
following: 
\begin{multline}
0\geq2\sum_{\ell=1}^{\gamma(t)-1}\left\langle y^{*}-P_{C_{i_{\ell+1}^{t}}}(y^{*}),P_{C_{i_{\ell}^{t}}}(y^{*})-P_{C_{i_{\ell+1}^{t}}}(y^{*})\right\rangle \\
+2\left\langle y^{*}-P_{C_{i_{1}^{t}}}(y^{*}),P_{C_{i_{\gamma(t)}^{t}}}(y^{*})-P_{C_{i_{1}^{t}}}(y^{*})\right\rangle \\
=\sum_{\ell=1}^{\gamma(t)-1}\left\Vert y^{*}-P_{C_{i_{\ell+1}^{t}}}(y^{*})\right\Vert ^{2}+\left\Vert y^{*}-P_{C_{i_{1}^{t}}}(y^{*})\right\Vert ^{2}-\sum_{\ell=1}^{\gamma(t)-1}\left\Vert y^{*}-P_{C_{i_{\ell}^{t}}}(y^{*})\right\Vert ^{2}\\
-\left\Vert y^{*}-P_{C_{i_{\gamma(t)}^{t}}}(y^{*})\right\Vert ^{2}+\sum_{\ell=1}^{\gamma(t)-1}\left\Vert P_{C_{i_{\ell+1}^{t}}}(y^{*})-P_{C_{i_{\ell}^{t}}}(y^{*})\right\Vert ^{2}+\left\Vert P_{C_{i_{1}^{t}}}(y^{*})-P_{C_{i_{\gamma(t)}^{t}}}(y^{*})\right\Vert ^{2}\\
=\sum_{\ell=1}^{\gamma(t)-1}\left\Vert P_{C_{i_{\ell+1}^{t}}}(y^{*})-P_{C_{i_{\ell}^{t}}}(y^{*})\right\Vert ^{2}+\left\Vert P_{C_{i_{1}^{t}}}(y^{*})-P_{C_{i_{\gamma(t)}^{t}}}(y^{*})\right\Vert ^{2}.\label{eq:3.26}
\end{multline}
Since the right-hand side of (\ref{eq:3.26}) is nonnegative it must
be equal to zero. So, we have $P_{C_{i_{\ell+1}^{t}}}(y^{*})=P_{C_{i_{\ell}^{t}}}(y^{*})$
for $\ell=1,2,\ldots,\gamma(t)-1,$ and $P_{C_{i_{1}^{t}}}(y^{*})=P_{C_{i_{\gamma(t)}^{t}}}(y^{*})$.
Therefore, $P_{C_{i_{\ell}^{t}}}(y^{*})\in\bigcap_{\ell=1}^{\gamma(t)}C_{i_{\ell}^{t}}$
for $t=1,2,\ldots,M.$ 

\noindent (ii) Continuing from (\ref{eq:3.26-1}), using the additional
assumption $\textup{int}\:\bigcap_{i\in I}C_{i}\neq\textrm{Ø}$ and
making repeated use of Lemma \ref{lem:16} yields, 
\begin{align}
y^{*} & \in\bigcap_{t=1}^{M}\left(\bigcap_{\ell=1}^{\gamma(t)}C_{i_{\ell}^{t}}\right)=\bigcap_{i=1}^{m}C_{i}.
\end{align}
 On the other hand, $\textup{\textit{\ensuremath{\bigcap_{i=1}^{m}C_{i}}}=Fix}\left(\prod_{t=1}^{M}Q_{t}\right)$
by applying the right-hand side expression of (\ref{eq:3.26-1}) and
Lemma \ref{lem:16}. Since $\prod_{t=1}^{M}Q_{t}$ is a nonexpansive
operator with a fixed point, it is quasi-nonexpansive \cite[Lemma 2.1.20]{Ceg-book},
and, therefore, it is Fejér monotone with respect to $\textup{Fix}\left(\prod_{t=1}^{M}Q_{t}\right),$i.e.,
with respect to $\bigcap_{i=1}^{m}C_{i}$ . Thus, $\left\{ y^{k}\right\} _{k=0}^{\infty}$,
generated by Algorithm 2 is a Fejér monotone sequence with respect
to $\bigcap_{i=1}^{m}C_{i}.$ By \cite[Proposition 5.10]{BC11}, the
convergence of $\left\{ y^{k}\right\} _{k=0}^{\infty}$ to $y^{*}$
is strong.
\end{proof}

\subsection{Special cases\label{sect:special-cases}}

For the String-Averaging Douglas-Rachford algorithm: (i) if all the
sets are included in one string then we obtain the Cyclic Douglas-Rachford
(CDR) algorithm of \cite[Section 3]{btam14}, (ii) if there are exactly
two sets in each string then we get an algorithm that can legitimately
be called the Simultaneous Douglas-Rachford (SDR) algorithm. This
SDR algorithm includes as a special case, when the weights are all
equal, the Averaged Douglas-Rachford algorithm of \cite[Theorem 3.3]{btam14}

For the Block-Iterative Douglas-Rachford algorithm: (i) if all the
sets are included in one block then we get the same Simultaneous Douglas-Rachford
(SDR) algorithm as above, including again the Averaged Douglas-Rachford
algorithm of \cite[Theorem 3.3]{btam14} as a special case. (ii) if
there are exactly two sets in each block, so that each two consecutive
blocks have a common set, then we get again the Cyclic Douglas-Rachford
(CDR) algorithm. 

Another case worth mentioning occurs if the initial point is included
in the first set of each string or each block. Then all the reflections
become orthogonal projections and our algorithms coincide with the
Strings-Averaging Projection (SAP) method \cite{ceh01} and with the
Block-Iterative Projection (BIP) method \cite{ac89}, respectively.
See also \cite[Corollary 3.1]{btam14}.

\section{A generalized $m$-set-Douglas-Rachford operator and algorithm\label{sect:third-generalization}}

In this section we propose a generalization of the 2-set-DR original
operator of Definition \ref{def:2-DR-1} that is applicable to $m$
sets and formulate an algorithmic structure to employ it. Instead
of reflecting consecutively into two sets and taking the midpoint
between the original point and the end-point of the two consecutive
reflections as the outcome of the 2-set-DR operator, we propose to
allow a finite number, say $r$ (greater or equal 2), of consecutive
reflections into $r$ sets and then taking the midpoint between the
original point and the end-point of the $r$ consecutive reflections
as the outcome of the newly defined operator. We name this as the
``generalized $r$-set-DR operator'', and show how it works algorithmically.
Before presenting those we need the following preliminary results.
\begin{prop}
\label{prop:reflection}\cite[Corollary 4.10]{BC11} Let $C$ be a
nonempty closed convex subset of $\mathcal{H}$. Then $\textup{Id}-P_{C}$
is firmly nonexpansive and $2P_{C}-\textup{Id}$ is nonexpansive.
\end{prop}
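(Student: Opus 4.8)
The plan is to reduce both claims to the single fact that the metric projection $P_{C}$ is itself firmly nonexpansive, and then to exploit two elementary algebraic equivalences relating an operator $T$ to $\textup{Id}-T$ and to $2T-\textup{Id}$. Neither step is deep; the whole argument is essentially bookkeeping once $P_{C}$ is shown to be firmly nonexpansive.

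First I would establish that $P_{C}$ is firmly nonexpansive. Fix $x,y\in\mathcal{H}$ and write $p:=P_{C}(x)$ and $q:=P_{C}(y)$. By the variational characterization of the projection onto a nonempty closed convex set (e.g. \cite[Theorem 1.2.4]{Ceg-book}), one has $\langle x-p,\,c-p\rangle\leq0$ and $\langle y-q,\,c-q\rangle\leq0$ for every $c\in C$. Choosing $c=q$ in the first inequality and $c=p$ in the second and adding them, then expanding the resulting inner product, yields $\|p-q\|^{2}\leq\langle x-y,\,p-q\rangle$, which is exactly the firm nonexpansiveness of $P_{C}$ in the sense of Definition \ref{def:fejer}(iv).

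Next I would treat $\textup{Id}-P_{C}$ using the identity that an operator $T$ is firmly nonexpansive if and only if $\textup{Id}-T$ is firmly nonexpansive. Setting $R:=\textup{Id}-T$ and expanding $\langle R(x)-R(y),x-y\rangle=\|x-y\|^{2}-\langle T(x)-T(y),x-y\rangle$ together with $\|R(x)-R(y)\|^{2}=\|x-y\|^{2}-2\langle T(x)-T(y),x-y\rangle+\|T(x)-T(y)\|^{2}$, one checks that the firm-nonexpansiveness inequality for $R$ is equivalent to that for $T$. Applying this with $T=P_{C}$ gives that $\textup{Id}-P_{C}$ is firmly nonexpansive.

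Finally, for $2P_{C}-\textup{Id}$ I would invoke the companion identity that $T$ is firmly nonexpansive if and only if $2T-\textup{Id}$ is nonexpansive. With $R:=2T-\textup{Id}$ one expands $\|R(x)-R(y)\|^{2}=4\|T(x)-T(y)\|^{2}-4\langle T(x)-T(y),x-y\rangle+\|x-y\|^{2}$, so that $\|R(x)-R(y)\|^{2}\leq\|x-y\|^{2}$ is equivalent to $\|T(x)-T(y)\|^{2}\leq\langle T(x)-T(y),x-y\rangle$, i.e. to the firm nonexpansiveness of $T$. Taking $T=P_{C}$ delivers the nonexpansiveness of $2P_{C}-\textup{Id}$. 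The only place demanding a little care, and what I would flag as the one potential pitfall rather than a genuine obstacle, is getting the direction of the variational inequality right when summing the two projection inequalities, and keeping the signs straight in the two norm expansions; everything else is immediate from the definitions.
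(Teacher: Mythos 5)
Your proposal is correct, and in fact the paper offers no proof of this proposition to compare against: it is quoted verbatim, with citation, as \cite[Corollary 4.10]{BC11}, a standard fact from the literature. What you have written is a faithful and complete reconstruction of the usual textbook argument. The derivation of firm nonexpansiveness of $P_{C}$ by summing the two variational inequalities $\langle x-p,q-p\rangle\leq0$ and $\langle y-q,p-q\rangle\leq0$ is exactly right, and both algebraic equivalences check out: expanding $\Vert R(x)-R(y)\Vert^{2}$ for $R=\textup{Id}-T$ shows that $T$ is FNE iff $\textup{Id}-T$ is FNE, and expanding it for $R=2T-\textup{Id}$ shows that $T$ is FNE iff $2T-\textup{Id}$ is NE. One economy worth noting: your third step is precisely the equivalence (i) $\Leftrightarrow$ (ii) of the paper's own Theorem \ref{thm:dr=00003Dfne} specialized to $\lambda=2$, since $T_{2}=2T-\textup{Id}$ is the reflection of $T$ in the sense of Definition \ref{def:relax-and-reflect-1}; you could have cited that theorem in place of the norm expansion. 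Likewise, only the forward implication of your $\textup{Id}-T$ equivalence is actually used, though proving the equivalence costs nothing extra. No gaps.
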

By \cite[Lemma 2.1.12]{Ceg-book} and \cite[Fig. 2.14]{Ceg-book}
any composition of nonexpansive (NE) operators is NE and any convex
combination of NE operators is NE.
\begin{thm}
\cite[Theorem 2.2.10(i)-(iii)]{Ceg-book}\label{thm:dr=00003Dfne}
Let $T:X\rightarrow\mathcal{H}$. Then the following conditions are
equivalent:
\begin{enumerate}
\item $T$ is firmly nonexpansive.
\item $T_{\lambda}$ is nonexpansive for any $\lambda\in[0,2]$.
\item $T$ has the form $T=\frac{1}{2}(S+\textup{Id})$, where $S:X\rightarrow\mathcal{H}$
is a nonexpansive operator.
\end{enumerate}
\end{thm}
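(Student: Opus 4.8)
The plan is to treat the reflection $S := 2T - \textup{Id} = T_{2}$ as the pivot that links all three conditions: first establish the algebraic equivalence (i) $\Leftrightarrow$ (iii), then close the remaining loop with (iii) $\Rightarrow$ (ii) $\Rightarrow$ (iii). Condition (iii) asserts exactly that $T = \frac{1}{2}(S+\textup{Id})$ with $S$ nonexpansive, which is the same as saying that $S = 2T-\textup{Id}$ is nonexpansive, so the whole theorem reduces to comparing firm nonexpansiveness of $T$, nonexpansiveness of the reflection $T_{2}$, and nonexpansiveness of every relaxation $T_{\lambda}$.

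The computational core is a single identity. For fixed $x,y\in X$ write $u := x-y$ and $v := T(x)-T(y)$. Expanding $\Vert (2T-\textup{Id})(x)-(2T-\textup{Id})(y)\Vert^{2} = \Vert 2v-u\Vert^{2} = 4\Vert v\Vert^{2} - 4\langle u,v\rangle + \Vert u\Vert^{2}$ shows that the inequality $\Vert S(x)-S(y)\Vert \leq \Vert x-y\Vert$ holds for all $x,y$ if and only if $\langle u,v\rangle \geq \Vert v\Vert^{2}$ for all $x,y$, which is precisely the firm-nonexpansiveness inequality of Definition \ref{def:fejer}(iv). Since $S$ nonexpansive is exactly the content of (iii), this identity yields (i) $\Leftrightarrow$ (iii) at once.

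For (iii) $\Rightarrow$ (ii) I would rewrite the relaxation in terms of the pivot. Substituting $T = \frac{1}{2}(S+\textup{Id})$ into $T_{\lambda} = (1-\lambda)\textup{Id} + \lambda T$ gives $T_{\lambda} = (1-\frac{\lambda}{2})\textup{Id} + \frac{\lambda}{2}S$. For $\lambda\in[0,2]$ the coefficient $\frac{\lambda}{2}$ lies in $[0,1]$, so $T_{\lambda}$ is a genuine convex combination of the two nonexpansive operators $\textup{Id}$ and $S$; by the fact recorded just before the theorem, that a convex combination of nonexpansive operators is nonexpansive, $T_{\lambda}$ is nonexpansive for every $\lambda\in[0,2]$. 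Conversely, (ii) $\Rightarrow$ (iii) is immediate: taking $\lambda=2$ in (ii) makes $T_{2} = 2T-\textup{Id}$ nonexpansive, so setting $S := T_{2}$ furnishes the representation $T = \frac{1}{2}(S+\textup{Id})$ demanded by (iii).

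There is no serious obstacle here; the argument is a matched set of elementary manipulations resting on the single expansion above. The only point requiring care is bookkeeping the relaxation coefficient: one must notice that for $\lambda\in[0,2]$ the operator $T_{\lambda}$ is a convex combination of $\textup{Id}$ and the reflection $S=T_{2}$ (and \emph{not} of $\textup{Id}$ and $T$), since this is what licenses invoking closure of nonexpansiveness under convex combinations over the entire parameter range $[0,2]$ rather than only near $\lambda=1$.
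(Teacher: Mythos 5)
Your proposal is correct, and it matches the standard argument: the paper itself states this result only by citation to Cegielski's book without reproducing a proof, and the proof there proceeds exactly as you do, via the expansion $\Vert 2v-u\Vert^{2}=4\Vert v\Vert^{2}-4\langle u,v\rangle+\Vert u\Vert^{2}$ showing that firm nonexpansiveness of $T$ is equivalent to nonexpansiveness of the reflection $S=2T-\textup{Id}$, together with the observation that $T_{\lambda}=(1-\tfrac{\lambda}{2})\textup{Id}+\tfrac{\lambda}{2}S$ is a convex combination of nonexpansive operators for $\lambda\in[0,2]$. Your closing caution --- that $T_{\lambda}$ must be read as a convex combination of $\textup{Id}$ and $S$, not of $\textup{Id}$ and $T$ --- is precisely the right bookkeeping point and is what makes the full range $[0,2]$ work.
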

Our generalized Douglas-Rachford operator is presented in the following
definition.
\begin{defn}
\label{def:m-dr}Let $C_{1},C_{2},\ldots,C_{m}\subseteq\mathcal{H}$
be nonempty closed convex sets. For $r=2,3,\ldots,m$ $(m\geq2)$
define the composite reflection operator $V_{C_{1},C_{2},\ldots,C_{r}}:\mathcal{H\rightarrow\mathcal{H}}$
by

\begin{equation}
V_{C_{1},C_{2},\ldots,C_{r}}:=R_{C_{r}}R_{C_{r-1}}\cdots R_{C_{1}}.\label{eq:composite ref.}
\end{equation}
The generalized $r$-set-DR operator $T_{C_{1},C_{2},\ldots,C_{r}}:\mathcal{H\rightarrow\mathcal{H}}$
is defined by 
\begin{equation}
T_{C_{1},C_{2},\ldots,C_{r}}:=\frac{1}{2}\left(\textup{Id}+V_{C_{1},C_{2},\ldots,C_{r}}\right).\label{eq:r-sets-DR op.}
\end{equation}

\end{defn}
For $r=2$ the generalized $r$-set-DR operator coincides with the
$2$-set-DR operator. For $r=3$ the generalized $r$-set-DR operator
coincides with the $3$-set-DR iteration defined in \cite[Eq. (2)]{artacho2013recent}.

We will make use of the following corollary which extends \cite[Corollary 4.3.17(ii)]{Ceg-book}.
\begin{cor}
\label{corollary 21-1}Let $C_{1},C_{2},\ldots,C_{m}\subseteq\mathcal{H}$
be nonempty closed convex sets with a nonempty intersection, and let
$V_{C_{1},C_{2},\ldots,C_{m}}:\mathcal{H\rightarrow\mathcal{H}}$
and $T_{C_{1},C_{2},\ldots,C_{m}}$ be as in Definition \ref{def:m-dr}
If $\textup{int}\:\bigcap_{i=1}^{m}C_{i}\neq\textrm{Ø}$ then 
\begin{equation}
\bigcap_{i=1}^{m}C_{i}=\textup{Fix}T_{C_{1},C_{2},\ldots,C_{m}}.
\end{equation}
\end{cor}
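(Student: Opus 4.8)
The plan is to prove the two inclusions separately, noting first that since $T_{C_{1},C_{2},\ldots,C_{m}}=\frac{1}{2}(\textup{Id}+V_{C_{1},C_{2},\ldots,C_{m}})$ we have $x\in\textup{Fix}\,T_{C_{1},C_{2},\ldots,C_{m}}$ if and only if $V_{C_{1},C_{2},\ldots,C_{m}}(x)=x$; thus it suffices to identify $\textup{Fix}\,V_{C_{1},C_{2},\ldots,C_{m}}$ with $\bigcap_{i=1}^{m}C_{i}$. The inclusion $\bigcap_{i=1}^{m}C_{i}\subseteq\textup{Fix}\,T_{C_{1},C_{2},\ldots,C_{m}}$ is immediate and does not use the interior hypothesis: if $x\in\bigcap_{i=1}^{m}C_{i}$ then $P_{C_{i}}(x)=x$, hence $R_{C_{i}}(x)=x$ for every $i$, so that $V_{C_{1},C_{2},\ldots,C_{m}}(x)=x$ and $T_{C_{1},C_{2},\ldots,C_{m}}(x)=x$.

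For the reverse inclusion, I would fix $x\in\textup{Fix}\,V_{C_{1},C_{2},\ldots,C_{m}}$ together with a center $c\in\textup{int}\,\bigcap_{i=1}^{m}C_{i}$, and track the orbit of $x$ under the successive reflections by setting $x_{0}:=x$ and $x_{\ell}:=R_{C_{\ell}}(x_{\ell-1})$ for $\ell=1,2,\ldots,m$, so that $x_{m}=V_{C_{1},C_{2},\ldots,C_{m}}(x)=x=x_{0}$; write $p_{\ell}:=P_{C_{\ell}}(x_{\ell-1})$. Expanding the reflection $R_{C_{\ell}}=2P_{C_{\ell}}-\textup{Id}$ yields the identity $\|x_{\ell}-c\|^{2}=\|x_{\ell-1}-c\|^{2}-4\langle p_{\ell}-c,\,x_{\ell-1}-p_{\ell}\rangle$, and the projection characterization \cite[Theorem 1.2.4]{Ceg-book}, applied with $c\in C_{\ell}$, gives $\langle x_{\ell-1}-p_{\ell},\,c-p_{\ell}\rangle\le 0$, i.e. each correction term $\langle p_{\ell}-c,\,x_{\ell-1}-p_{\ell}\rangle$ is nonnegative. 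Summing over $\ell=1,\ldots,m$ the left-hand sides telescope to $\|x_{m}-c\|^{2}-\|x_{0}-c\|^{2}=0$, forcing $\sum_{\ell=1}^{m}\langle p_{\ell}-c,\,x_{\ell-1}-p_{\ell}\rangle=0$; since every summand is nonnegative each one must vanish.

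The decisive step, and the only place the interior assumption enters, is to convert the equality $\langle x_{\ell-1}-p_{\ell},\,c-p_{\ell}\rangle=0$ into the membership $x_{\ell-1}\in C_{\ell}$. Because $c\in\textup{int}\,C_{\ell}$, a whole ball $c+\varepsilon v\in C_{\ell}$ (for $\|v\|\le 1$ and small $\varepsilon>0$) is available, and the projection inequality $\langle x_{\ell-1}-p_{\ell},\,(c+\varepsilon v)-p_{\ell}\rangle\le 0$ combined with the just-proved equality yields $\langle x_{\ell-1}-p_{\ell},\,v\rangle\le 0$ for every direction $v$, whence $x_{\ell-1}=p_{\ell}$, that is $x_{\ell-1}\in C_{\ell}$. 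Then $x_{\ell}=R_{C_{\ell}}(x_{\ell-1})=x_{\ell-1}$, so by induction all the $x_{\ell}$ coincide with $x_{0}=x$; consequently $x=x_{\ell-1}\in C_{\ell}$ for every $\ell$, giving $x\in\bigcap_{i=1}^{m}C_{i}$. I expect this passage from an orthogonality equality to set membership to be the main obstacle, since without the interior hypothesis the vanishing inner product only says that $x_{\ell-1}-p_{\ell}$ is orthogonal to $c-p_{\ell}$ and need not force $x_{\ell-1}\in C_{\ell}$; the nonempty interior is exactly what upgrades the single orthogonality relation to the full projection inequality in all directions.
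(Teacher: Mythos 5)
Your proof is correct, and it takes a genuinely different---more elementary---route than the paper's. The paper disposes of the corollary in two lines of citations: since $\textup{Fix}\,R_{C_{i}}=C_{i}$ and $\textup{int}\:\bigcap_{i=1}^{m}C_{i}\neq\emptyset$, each reflection $R_{C_{i}}$ is $C$-strictly quasi-nonexpansive by \cite[Proposition 2.1.41]{Ceg-book}, and then Theorem \ref{thm:2}(ii), applied to the composition $V_{C_{1},C_{2},\ldots,C_{m}}$, immediately yields $\textup{Fix}\,T_{C_{1},\ldots,C_{m}}=\textup{Fix}\,V_{C_{1},\ldots,C_{m}}=\bigcap_{i=1}^{m}\textup{Fix}\,R_{C_{i}}=\bigcap_{i=1}^{m}C_{i}$. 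What you have done, in effect, is reprove the relevant instance of both cited facts from scratch: your identity $\Vert x_{\ell}-c\Vert^{2}=\Vert x_{\ell-1}-c\Vert^{2}-4\langle p_{\ell}-c,\,x_{\ell-1}-p_{\ell}\rangle$ checks out (expand $x_{\ell}-c=2(p_{\ell}-c)-(x_{\ell-1}-c)$), the sign of the correction term via the projection characterization is right, and your telescoping argument---the sum of nonnegative corrections must vanish because $x_{m}=x_{0}$, hence each summand vanishes---is exactly the mechanism hidden inside Theorem \ref{thm:2}(ii) together with the strictness assertion of the cited proposition. The interior-point step is also sound: from $\langle x_{\ell-1}-p_{\ell},\,c-p_{\ell}\rangle=0$ and $B(c,\varepsilon)\subseteq\bigcap_{i=1}^{m}C_{i}\subseteq C_{\ell}$ you get $\langle x_{\ell-1}-p_{\ell},\,v\rangle\le 0$ for all unit $v$, forcing $x_{\ell-1}=p_{\ell}$, and the subsequent chain $x_{1}=x_{0}$, $x_{2}=x_{1}$, and so on is needed and correctly carried out (vanishing summands alone give $x_{\ell-1}\in C_{\ell}$, not yet $x\in C_{\ell}$). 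The trade-off between the two proofs: the paper's is shorter and keeps the corollary inside the $C$-strictly-quasi-nonexpansive framework it uses throughout, so it generalizes beyond reflections at no extra cost; yours is self-contained, requires nothing beyond the projection characterization, and makes completely explicit where the hypothesis $\textup{int}\:\bigcap_{i=1}^{m}C_{i}\neq\emptyset$ enters---your closing remark on this is accurate, and indeed without it $\textup{Fix}\,T_{A,B}$ can strictly contain $A\cap B$, which is why Lemma \ref{lem:fixed-points-2-RD} only asserts $P_{A}\textup{Fix}\,T_{A,B}=A\cap B$ in general.
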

\begin{proof}
It is clear that $\bigcap_{i=1}^{m}\textup{int}C_{i}=\bigcap_{i=1}^{m}\textup{int}\,\textup{Fix}R_{C_{i}}\subseteq\bigcap_{i=1}^{m}\textup{Fix}R_{C_{i}}.$
By \cite[Proposition 2.1.41]{Ceg-book} $R_{C_{i}},$ for $i=1,2,\ldots,m,$
are $C$-strictly quasi-nonexpansive. Theorem \ref{thm:2}(ii) and
the fact that $\textup{Fix}R_{C_{i}}=C_{i},$ for $i=1,2,\ldots,m$,
yield 
\begin{equation}
\textup{Fix}T_{C_{1},C_{2},\ldots,C_{m}}=\textup{Fix}V_{C_{1},C_{2},\ldots,C_{m}}=\bigcap_{i=1}^{m}\textup{Fix}R_{C_{i}}=\bigcap_{i=1}^{m}C_{i}.
\end{equation}

\end{proof}
Next we present the algorithm that uses generalized $r$-set-DR operators
and prove its convergence.\medskip{}

\noindent \textbf{Algorithm 3.}

\noindent \textbf{Initialization}: $x^{0}\in\mathcal{H}$ is arbitrary.

\noindent \textbf{Iterative Step:} Given the current iterate $x^{k}$,
calculate, for all $r=2,3,\ldots,m,$

\begin{equation}
x^{k+1}=\sum_{r=2}^{m}w_{r}T_{C_{1},C_{2},\ldots,C_{r}}(x^{k})
\end{equation}

\noindent with $w_{r}>0,$ for all $r=2,3,\ldots,m$, and $\sum_{r=2}^{m}w_{r}=1$
.
\begin{thm}
\label{thm:23}Let $C_{1},C_{2},\ldots,C_{m}\subseteq\mathcal{H}$
be nonempty closed convex sets with a nonempty intersection. if 
\begin{equation}
\textup{int}\:\bigcap_{i\in I}C_{i}\neq\textrm{Ø}
\end{equation}
 then for any $x^{0}\in\mathcal{H}$, any sequence $\left\{ x^{k}\right\} _{k=0}^{\infty}$,
generated by Algorithm 3, converges strongly to a point $x^{*}\in\bigcap_{i\in I}C_{i}$. \end{thm}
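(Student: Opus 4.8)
The plan is to run the argument of Theorem \ref{thm:7} almost verbatim, now regarding the $m-1$ generalized $r$-set-DR operators $T_{C_{1},C_{2},\ldots,C_{r}}$, $r=2,3,\ldots,m$, as the individual operators whose constant-weight convex combination is iterated by Algorithm 3. First I would record the properties of a single generalized $r$-set-DR operator. Each reflection $R_{C_{i}}=2P_{C_{i}}-\textup{Id}$ is nonexpansive by Proposition \ref{prop:reflection}, and since a composition of nonexpansive operators is nonexpansive, the composite reflection $V_{C_{1},C_{2},\ldots,C_{r}}=R_{C_{r}}R_{C_{r-1}}\cdots R_{C_{1}}$ is nonexpansive. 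Writing $T_{C_{1},C_{2},\ldots,C_{r}}=\frac{1}{2}(\textup{Id}+V_{C_{1},C_{2},\ldots,C_{r}})$ and applying the implication (iii)$\Rightarrow$(i) of Theorem \ref{thm:dr=00003Dfne} shows that each $T_{C_{1},C_{2},\ldots,C_{r}}$ is firmly nonexpansive; as every point of $\bigcap_{i=1}^{m}C_{i}$ is a fixed point, the arguments of Lemma \ref{lem:2} apply unchanged and give that each $T_{C_{1},C_{2},\ldots,C_{r}}$ is both strongly quasi-nonexpansive and strongly nonexpansive.

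Next I would identify the fixed-point sets. Because $\bigcap_{i=1}^{m}C_{i}\subseteq\bigcap_{i=1}^{r}C_{i}$, the hypothesis that $\textup{int}\:\bigcap_{i\in I}C_{i}$ is nonempty forces $\textup{int}\:\bigcap_{i=1}^{r}C_{i}$ to be nonempty for every $r$. Hence Corollary \ref{corollary 21-1}, applied to the subfamily $C_{1},C_{2},\ldots,C_{r}$, yields $\textup{Fix}\,T_{C_{1},C_{2},\ldots,C_{r}}=\bigcap_{i=1}^{r}C_{i}$ for each $r=2,\ldots,m$. Intersecting over $r$ and noting that the $r=m$ term is contained in all the others, the common fixed-point set is $\bigcap_{r=2}^{m}\textup{Fix}\,T_{C_{1},C_{2},\ldots,C_{r}}=\bigcap_{i=1}^{m}C_{i}=:C$, which is nonempty and has nonempty interior.

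Then I would analyse the iteration operator $U:=\sum_{r=2}^{m}w_{r}T_{C_{1},C_{2},\ldots,C_{r}}$. Being strongly quasi-nonexpansive, each $T_{C_{1},C_{2},\ldots,C_{r}}$ is strictly quasi-nonexpansive and therefore $C$-strictly quasi-nonexpansive by Proposition \ref{prop:sQNE}, with the choice $C=\textup{int}\:\bigcap_{i\in I}C_{i}$. The weights $w_{r}$ are positive constants summing to one, hence form an appropriate constant-weight vector in $ri\vartriangle_{m-1}$, so Theorem \ref{thm:2}(i) gives $\textup{Fix}\,U=\bigcap_{r=2}^{m}\textup{Fix}\,T_{C_{1},C_{2},\ldots,C_{r}}=\bigcap_{i=1}^{m}C_{i}$ and shows that $U$ is $\textup{int}\:C$-strictly quasi-nonexpansive, and in particular quasi-nonexpansive. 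Independently, since a convex combination of strongly nonexpansive operators is strongly nonexpansive, $U$ is strongly nonexpansive and thus nonexpansive, and by Theorem \ref{thm:4} it is asymptotically regular.

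Finally I would deduce convergence exactly as in Theorem \ref{thm:7}. As $U$ is nonexpansive, asymptotically regular and has a fixed point, Opial's Theorem \ref{thm:6-opial} gives that $\{x^{k}\}=\{U^{k}(x^{0})\}$ converges weakly to some $x^{*}\in\textup{Fix}\,U=\bigcap_{i=1}^{m}C_{i}$. Moreover $U$ is quasi-nonexpansive, hence Fej\'er monotone with respect to $\textup{Fix}\,U=C$, so $\{x^{k}\}$ is a Fej\'er monotone sequence with respect to the set $C$, whose interior is nonempty; by \cite[Proposition 5.10]{BC11} it therefore converges strongly, and its strong limit must agree with the weak limit $x^{*}$. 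This establishes strong convergence of $\{x^{k}\}$ to $x^{*}\in\bigcap_{i\in I}C_{i}$. I expect the crux of the argument to be the identification $\textup{Fix}\,T_{C_{1},C_{2},\ldots,C_{r}}=\bigcap_{i=1}^{r}C_{i}$: without the interior hypothesis the composite reflection $V_{C_{1},C_{2},\ldots,C_{r}}$, and hence $T_{C_{1},C_{2},\ldots,C_{r}}$, may possess fixed points lying strictly outside $\bigcap_{i=1}^{r}C_{i}$, so the reduction genuinely rests on Corollary \ref{corollary 21-1}; as in the remark following Theorem \ref{thm:7}, the subsequent passage to $\textup{Fix}\,U$ via Theorem \ref{thm:2} likewise depends on the nonemptiness of $\textup{int}\:C$.
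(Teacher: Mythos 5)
Your proof is correct, and it reaches every checkpoint of the paper's own argument: firm nonexpansiveness of each $T_{C_{1},C_{2},\ldots,C_{r}}$ via Theorem \ref{thm:dr=00003Dfne}, the identification $\textup{Fix}\,T_{C_{1},C_{2},\ldots,C_{r}}=\bigcap_{i=1}^{r}C_{i}$ via Corollary \ref{corollary 21-1}, asymptotic regularity via Theorem \ref{thm:4}, weak convergence via Opial's Theorem \ref{thm:6-opial}, and strong convergence via Fej\'er monotonicity and \cite[Proposition 5.10]{BC11}. The genuine divergence is in the middle: to compute the fixed points of $U=\sum_{r=2}^{m}w_{r}T_{C_{1},C_{2},\ldots,C_{r}}$ you transplant the SA-DR machinery of Theorem \ref{thm:7} --- SQNE $\Rightarrow$ sQNE $\Rightarrow$ $C$-sQNE (Proposition \ref{prop:sQNE}) followed by Theorem \ref{thm:2}(i), which already consumes the interiority hypothesis at that stage --- whereas the paper invokes the lighter Theorem \ref{thm:3}, which needs only nonexpansiveness and a common fixed point, obtaining $\textup{Fix}\,U=\bigcap_{r=2}^{m}\textup{Fix}\,T_{C_{1},C_{2},\ldots,C_{r}}$ unconditionally and deferring all use of $\textup{int}\,\bigcap_{i\in I}C_{i}\neq\emptyset$ to the final application of Corollary \ref{corollary 21-1}; likewise you derive asymptotic regularity from strong nonexpansiveness of convex combinations of SNE operators, while the paper uses firm nonexpansiveness of the sum (Proposition \ref{prop:fne-is-ne}) plus a fixed point. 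Your heavier route is sound (the citations behind Lemma \ref{lem:2} hold for any firmly nonexpansive operator with a fixed point, so they do ``apply unchanged'') and buys the extra conclusion that $U$ is $\textup{int}\,\Gamma$-strictly quasi-nonexpansive, which the theorem does not need; and your explicit observation that $\textup{int}\,\bigcap_{i=1}^{m}C_{i}\neq\emptyset$ forces $\textup{int}\,\bigcap_{i=1}^{r}C_{i}\neq\emptyset$, justifying the application of Corollary \ref{corollary 21-1} to each subfamily, is a step the paper leaves implicit and is worth stating. One cosmetic flaw: you use the symbol $C$ both for $\bigcap_{i=1}^{m}C_{i}$ and for $\textup{int}\,\bigcap_{i\in I}C_{i}$; this is harmless but should be disentangled.
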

\begin{proof}
By Proposition \ref{prop:reflection}, the reflection $R_{C_{i}}$
for all $i=1,2,\ldots,m$, is nonexpansive operator. By the facts
noted after Proposition \ref{prop:reflection}, the operator $V_{C_{1},C_{2},\ldots,C_{r}}$
defined in (\ref{eq:composite ref.}) is nonexpansive. By Theorem
\ref{thm:dr=00003Dfne}(i) and (iii), the operator $T_{C_{1},C_{2},\ldots,C_{r}}$
defined in (\ref{eq:r-sets-DR op.}) is firmly nonexpansive, and by
Proposition \ref{prop:fne-is-ne} it is nonexpansive. Since $\bigcap_{i=1}^{m}C_{i}\neq\textrm{Ø}$,
any point in the intersection is a fixed point of the reflection $R_{C_{i}}$
for all $i=1,2,\ldots,m$, and such point is also a fixed point of
any operator $T_{C_{1},C_{2},\ldots,C_{r}}$ for all $r=2,3,\ldots,m$.
By Theorem \ref{thm:3} for $w_{r}>0,$ for all $r=2,3,\ldots,m$,
and $\sum_{r=2}^{m}w_{r}=1$,
\begin{equation}
\textup{Fix}\left(\sum_{r=2}^{m}w_{r}T_{C_{1},C_{2},\ldots,C_{r}}\right)=\bigcap_{r=2}^{m}\textup{Fix}T_{C_{1},C_{2},\ldots,C_{r}}\supseteq\bigcap_{i=1}^{m}C_{i}\neq\textrm{Ø}.\label{eq:Fix of r-sets-DR}
\end{equation}

Using again Proposition \ref{prop:fne-is-ne}, the operator $\sum_{r=2}^{m}w_{r}T_{C_{1},C_{2},\ldots,C_{r}}$
is firmly nonexpansive, thus, nonexpansive, and has a fixed point
according to (\ref{eq:Fix of r-sets-DR}). So, by Theorem \ref{thm:4},
it is asymptotically regular, therefore, by Theorem \ref{thm:6-opial},
\textit{$\left\{ x^{k}\right\} _{k=0}^{\infty}$,} generated by Algorithm
3, converges weakly to a point $x^{*}\in\mathcal{H}$ for which \textit{
\begin{equation}
x^{*}\in\textup{Fix}\left(\sum_{r=2}^{m}w_{r}T_{C_{1},C_{2},\ldots,C_{r}}\right).
\end{equation}
}By (\ref{eq:Fix of r-sets-DR}) and Corollary \ref{corollary 21-1}
we have
\begin{equation}
\textup{Fix}\left(\sum_{r=2}^{m}w_{r}T_{C_{1},C_{2},\ldots,C_{r}}\right)=\bigcap_{r=2}^{m}\textup{Fix}T_{C_{1},C_{2},\ldots,C_{r}}=\bigcap_{r=2}^{m}\left(\bigcap_{i=1}^{r}C_{i}\right)=\bigcap_{d=1}^{m}C_{d},
\end{equation}
therefore, $x^{*}\in\bigcap_{i\in I}C_{i}$. To prove that the convergence
is strong recall that $\sum_{r=2}^{m}w_{r}T_{C_{1},C_{2},\ldots,C_{r}}$
is firmly nonexpansive, thus nonexpansive, according to \cite[Theorem 2.2.4]{Ceg-book},
with a fixed point. Therefore, it is quasi-nonexpansive \cite[Lemma 2.1.20]{Ceg-book},
and thus is Fejér monotone . Hence, \textit{$\left\{ x^{k}\right\} _{k=0}^{\infty}$},
generated by the Algorithm 3, is a Fejér monotone sequence . By \cite[Proposition 5.10]{BC11},
the convergence of \textit{$\left\{ x^{k}\right\} _{k=0}^{\infty}$
}to $x^{*}$ is strong.
\end{proof}
If we replace the reflections by projections, we get the convergence
to a point in the intersection of the sets, because the algorithm
then becomes a special case of string-averaging projections (SAP),
see, \cite{censor2013convergence}.

\noindent \medskip{}

\textbf{Acknowledgments}. We thank Andrzej Cegielski for having read
carefully our work and for his insightful comments. We thank Rafa\l{}
Zalas for a discussion on an earlier draft of this paper. We greatly
appreciate the insightful and constructive comments of two anonymous
referees and the Associate Editor which helped us improve the paper.
This work was supported by Research Grant No. 2013003 of the United
States-Israel Binational Science Foundation (BSF).

\bibliographystyle{amsplain}
\addcontentsline{toc}{section}{\refname}\bibliography{Bib-rafiq-paper1-24-11-15}

\end{document}